\documentclass[12pt]{article}

\usepackage{amsmath,amsfonts,amssymb}
\usepackage{bbm}

\usepackage{kotex}
\setcounter{page}{1}
\usepackage{lipsum}

\usepackage{xcolor}
\usepackage{ulem}

\frenchspacing
\textwidth              16 cm
\textheight             22.0 cm
\topmargin              -1.1  cm
\evensidemargin         0.0 cm
\oddsidemargin          0.0 cm


\newtheorem{defn}{Definition}[section]
\newtheorem{theo}[defn]{Theorem}
\newtheorem{lem}[defn]{Lemma}
\newtheorem{prop}[defn]{Proposition}

\newtheorem{rem}[defn]{Remark}
\newtheorem{exam}[defn]{Example}

\newcommand\blfootnote[1]{%
  \begingroup
  \renewcommand\thefootnote{}\footnote{#1}%
  \addtocounter{footnote}{-1}%
  \endgroup
}

\newenvironment{proof}{{\bf Proof }}{{\vskip 0.1cm \hfill$\Box$}}

\begin{document} 
\noindent
{\Large \bf Strong uniqueness of finite dimensional Dirichlet \\[2pt]operators with singular drifts}
{\blfootnote{The research of Haesung Lee was supported by Basic Science Research Program through the National Research Foundation of Korea (NRF) funded by the Ministry of Education (2020R1A6A3A01096151). \\[5pt]
Mathematics Subject Classification (2020): Primary 47D60, 31C25, 35J15, Secondary 60J46, 47D07 \\[3pt]
Keywords: strong uniqueness, essential self-adjointness, Dirichlet operators, Dirichlet forms, resolvents  }} \\ \\
\bigskip
\noindent
{\bf Haesung Lee}  \\
\noindent
{\small{\bf Abstract.}  
We show $L^r(\mathbb{R}^d, \mu)$-uniqueness  for any $r \in (1, 2]$ and the essential self-adjointness of a Dirichlet operator $Lf = \Delta f +\langle \frac{1}{\rho}\nabla \rho , \nabla f \rangle$,\, $f \in C_0^{\infty}(\mathbb{R}^d)$ with $d \geq 3$ and $\mu=\rho dx$. In particular, $\nabla \rho$ is allowed to be in $L^d_{loc}(\mathbb{R}^d, \mathbb{R}^d)$ or in $L^{2+\varepsilon}_{loc}(\mathbb{R}^d, \mathbb{R}^d)$ for some $\varepsilon>0$, while $\rho$ is required to be locally bounded below and above by strictly positive constants. The main tools in this paper are elliptic regularity results for divergence and non-divergence type operators and basic properties of Dirichlet forms and their resolvents.
\\

\section{Introduction} \label{intro}
The main target of this paper is to show $L^{r}(\mathbb{R}^d, \mu)$-uniqueness of a Dirichlet operator $(L, C_0^{\infty}(\mathbb{R}^d))$ for any $r \in (1, 2]$, where $\mu=\rho dx$ and
\begin{equation} \label{dirichletop}
Lf =  \Delta f +\langle \frac{1}{\rho} \nabla \rho, \nabla f \rangle, \qquad f \in C_0^{\infty}(\mathbb{R}^d)
\end{equation}
under the assumption {\bf (H)} or {\bf (S)} below. Here, the meaning of $L^r(\mathbb{R}^d, \mu)$-uniqueness of  $(L, C_0^{\infty}(\mathbb{R}^d))$ is that there exists only one $C_0$-semigorup of contractions on $L^r(\mathbb{R}^d, \mu)$ whose generator extends $(L, C_0^{\infty}(\mathbb{R}^d))$. $L^r(\mathbb{R}^d, \mu)$-uniqueness is also called strong uniqueness. Since $(L, C_0^{\infty}(\mathbb{R}^d))$ is a symmetric and positive operator in $L^2(\mathbb{R}^d, \mu)$, it follows from \cite[Lemma 1.4]{Eb} that $L^2(\mathbb{R}^d, \mu)$-uniqueness of $(L, C_0^{\infty}(\mathbb{R}^d))$ is equivalent to the essential self-adjointness of $(L, C_0^{\infty}(\mathbb{R}^d))$, i.e. the closure of $(L, C_0^{\infty}(\mathbb{R}^d))$ on $L^2(\mathbb{R}^d, \mu)$ is a self-adjoint operator. \\
The question for the essential self-adjointness of $(L, C_0^{\infty}(\mathbb{R}^d))$ is known as a classical problem in mathematical physics, because there exists a unitary equivalence between $L$ and a Schr\"{o}dinger operator $H=\Delta + V$ such that $H=\phi L \phi^{-1}$ with $\phi = \sqrt{\rho}$ and $V=\frac{\Delta \phi}{\phi}$. Since this equivalence is satisfied if $\rho$ is sufficiently regular, $L$ is sometimes called the generalized Schr\"{o}dinger operator. We refer to \cite{W85} and \cite{AHS77} for relevant details.\\
 Meanwhile, it is well-known, see \cite{T92} (and also \cite{RZ94}, \cite{CF96}), that if $\rho>0$ a.e. and $\sqrt{\rho} \in H^{1,2}_{loc}(\mathbb{R}^d)$, then $(L, C_0^{\infty}(\mathbb{R}^d))$ is Markov-unique, i.e. there exists only one symmetric and sub-Markovian $C_0$-semigroup of contractions on $L^2(\mathbb{R}^d, \mu)$ whose generator extends $(L, C_0^{\infty}(\mathbb{R}^d))$. 
But in order to obtain the essential self-adjointness of $(L, C_0^{\infty}(\mathbb{R}^d))$ using the existing literature, more regularity assumptions on $\rho$ are required.\\
Let us briefly introduce previous results on the essential self-adjointness of $(L, C_0^{\infty}(\mathbb{R}^d))$ progressed so far. N. Wielens showed in \cite[3.1 Theorem]{W85} that if 
$$
d \geq 2 \text{ and } \rho \text{ is locally Lipschitz continuous and strictly positive},
$$
then $(L, C_0^{\infty}(\mathbb{R}^d))$ is essentially self-adjoint. In particular, in case $d=1$, it turns out in \cite[2.3 Theorem]{W85} that $(L, C_0^{\infty}(\mathbb{R}))$ is essentially self-adjoint if 
$$
\text{ $\rho$ is absolutely continuous and strictly positive with $\rho' \in L_{loc}^2(\mathbb{R}, \mu)$.} 
$$
Regarding a global regularity assumption on $\rho$, it is known in \cite{LS92} that if $\frac{\nabla \rho}{\rho} \in L^4(\mathbb{R}^d, \mu)$, then $(L, C_0^{\infty}(\mathbb{R}^d))$ is essentially self-adjoint. On the other hand, pioneering work for essential self-adjointness of $(L, C_0^{\infty}(\mathbb{R}^d))$
was developed by V. Bogachev, N. V. Krylov and M. R\"{o}ckner under a weaker assumption of local regularity than the assumption of local Lipschitz continuity on $\rho$. They showed in \cite[Theorem 7]{BKR97} that $(L, C_0^{\infty}(\mathbb{R}^d))$ is essentially self-adjoint if the following condition {\bf ($\alpha$)} is assumed. \\
\text{}\\
{\it 
{\bf ($\alpha$)}{\rm :}\;  $d \geq 2$, $\rho=\varphi^2$ with $\varphi \in H^{1,2}_{loc}(\mathbb{R}^d)$ such that $\rho$ is locally uniformly positive, i.e. for any open ball $B$, $\inf_B \rho >0$. For some $p>d$, $\|\frac{\nabla \rho}{\rho}\| \in L^{p}_{loc}(\mathbb{R}^d, \mu)$.
}
\\
\\
Actually, it follows from \cite[Corollary 8]{BKR97} (cf. proofs of \cite[Theorem 7, Corollary 8]{BKR97})
that the condition {\bf ($\alpha$)} is
equivalent to 
\\
\\
{
\it
{\bf ($\alpha^\prime$)}{\rm :} \; $d \geq 2$, $\rho \in H^{1,p}_{loc}(\mathbb{R}^d) \cap C(\mathbb{R}^d)$ for some $p>d$ with $\rho(x)>0$ for all $x \in \mathbb{R}^d$.
}
\\
\\
Strong uniqueness for non-symmetirc operators is investigated in \cite{Eb20}, but the results there are covered by \cite{BKR97} 
if $L$ is as in \eqref{dirichletop} and $\rho$ is locally bounded and locally uniformly positive. By \cite[Theorem 1]{L99}, the following condition {\bf ($\beta$)} implies the essential self-adjointness of $(L, C_0^{\infty}(\mathbb{R}^d))$.
\\
\\
{
\it
{\bf ($\beta$)}{\rm :}
$d \geq 1$, $\|\frac{\nabla \rho}{\rho}\| \in L^4_{loc}(\mathbb{R}^d,\mu)$ and $R \in (0, \infty)$ is given. For any $R_1>R$, there exists a positive constant $C=C(R_1)$ such that the following
inequality holds}
\begin{equation*} \label{lavbasin}
\int_{B_{R_1} \setminus B_{R}} \Big \|\frac{1}{\rho}\nabla \rho \Big \|^2 \varphi^2 d\mu \leq  C \left( \|\nabla \varphi \|_{L^2(\mathbb{R}^d, \mu)} + \|\varphi\|_{L^2(\mathbb{R}^d, \mu)}  \right), \qquad \forall \varphi \in C_0^{\infty}(\mathbb{R}^d).
\end{equation*}
\text{}\\
If $d \geq 3$ and $\rho$ is locally bounded and locally uniformly positive and there exists $R>0$ such that $\|\nabla \rho\| \in L_{loc}^d(\mathbb{R}^d \setminus B_{R}) \cap L^4_{loc}(\mathbb{R}^d)$, then {\bf ($\beta$)} follows by the  H\"{o}lder  and Sobolev inequalities. Therefore, \cite[Theorem 1]{L99} can be regarded as a generalization of \cite[Theorem 7]{BKR97} in case $d \geq 4$. \\
On the other hand, M. Kolb developed in \cite[Theorem 1]{K08} another criterion for the essential self-adjointness of $(L, C_0^{\infty}(\mathbb{R}^d))$, which is stated as follows. If the condition {\bf ($\gamma$)} below holds, then $(L, C_0^{\infty}(\mathbb{R}^d))$ is essentially self-adjoint. \\
\\
{\it
{\bf ($\gamma$)}{\rm :} $\rho>0$ a.e. with $\sqrt{\rho} \in H^{1,2}_{loc}(\mathbb{R}^d)$ and $\left \|\frac{\nabla \rho}{\rho}\right\| \in L^4_{loc}(\mathbb{R}^d, \mu)$. For any $R>0$, there exists constants $\varepsilon>0$ and $C>0$ such that
$$
\sup_{B \subset B_{R}}\left( \frac{1}{dx(B)} \int_{B} \rho(x)^{1+\varepsilon} dx\right) \left( \frac{1}{dx(B)} \int_{B} \rho(x)^{-1-\varepsilon} dx\right) \leq C.
$$
}
\\
Remarkably, regardless of dimension $d \geq 1$, if $\rho$ is locally bounded and locally unifomly positive and $\|\nabla\rho\| \in L^{4}_{loc}(\mathbb{R}^d)$, then the condition {\bf ($\gamma$)} is fulfilled, so that $(L, C_0^{\infty}(\mathbb{R}^d))$ is essentially self-adjoint by \cite[Theorem 1]{K08}. Thus, \cite[Theorem 1]{K08} is more general than \cite[Theorem 1]{L99}, if $\rho$ is locally bounded and locally unifomly positive. However, if $d=2 \text{ or } 3$, then the condition that $\rho$ is locally bounded and locally unifomly positive and that $\|\nabla\rho\| \in L^{4}_{loc}(\mathbb{R}^d)$ is still more restrictive than ($\alpha^\prime$). Hence, in case where $d=2$ or $3$, it is natural to raise a question of whether the essential self-adjointness of $(L, C_0^{\infty}(\mathbb{R}^d))$ holds or not if $\rho$ is locally bounded and locally uniformly positive and $\| \nabla \rho \| \in L^d_{loc}(\mathbb{R}^d)$. 
Before presenting our main result, let us introduce our basic two conditions, which will be assumed individually. \\ \\
{
{\bf (H)}{\rm:} \it
$d \geq 3$, $\rho \in H^{1,d}_{loc}(\mathbb{R}^d)$, and for any open ball $B$ in $\mathbb{R}^d$, there exists a constant $c_B \in (0,1)$ such that
\begin{equation} \label{bddpos}
c_{B} \leq \rho(x) \leq c^{-1}_{B}, \quad \text{ for a.e. $x \in B$.} 
\end{equation}
} \\
{
{\bf (S)}{\rm:} \it
$d \geq 3$ and there exist $N_0 \in \mathbb{N}$ and $\varepsilon>0$ such that $\rho \in C(\mathbb{R}^d) \cap H^{1,d}_{loc}(\mathbb{R}^d \setminus B_{N_0}) \cap H_{loc}^{1,2+\varepsilon}(\mathbb{R}^d)$ and for any open ball $B$ in $\mathbb{R}^d$, there exists a constant $c_B \in (0,1)$ such that \eqref{bddpos} holds.
} \\ \\
Then our main result is Theorem \ref{mainthm} which is stated as below.
\begin{theo} \label{mainthm}
Assume that either {\bf (H)}  or {\bf (S)} holds. Then $(L, C_0^{\infty}(\mathbb{R}^d))$ defined as in \eqref{dirichletop} is $L^r(\mathbb{R}^d, \mu)$-unique for any $r \in (1, 2]$. In particular,   $(L, C_0^{\infty}(\mathbb{R}^d))$ is essentially self-adjoint.
\end{theo}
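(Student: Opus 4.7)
The plan is to reduce $L^r(\R^d,\mu)$-uniqueness of $(L, C_0^\infty(\R^d))$ to a Liouville-type vanishing statement, which I will then prove by an energy/cutoff argument. Since $(L, C_0^\infty(\R^d))$ is dissipative on each $L^r(\R^d,\mu)$ for $r\in(1,\infty)$ (integration by parts yields $\int (Lf)|f|^{r-2}f\,d\mu = -(r-1)\int \rho|f|^{r-2}|\nabla f|^2\,dx \le 0$), the general criterion of \cite{Eb} tells us that $L^r$-uniqueness is equivalent to the density of $(\alpha-L)(C_0^\infty(\R^d))$ in $L^r(\R^d,\mu)$ for some $\alpha>0$. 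By Hahn--Banach and the formal symmetry of $L$, this in turn reduces, for $r\in(1,2]$ with $r' = r/(r-1) \in [2,\infty)$, to proving: every $u \in L^{r'}(\R^d,\mu)$ satisfying
\begin{equation*}
\int_{\R^d} u\,(\alpha-L)\varphi\,d\mu = 0 \quad \text{for all } \varphi \in C_0^\infty(\R^d)
\end{equation*}
must be zero.

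The second step is elliptic regularity. Rewriting $L$ in divergence form as $Lu = \rho^{-1}\nabla\cdot(\rho\nabla u)$, the display above says that $u$ is a distributional solution of the uniformly elliptic divergence-form equation $\nabla\cdot(\rho\nabla u) = \alpha\rho u$ on every bounded set, by virtue of the local two-sided bounds on $\rho$ from (H) and (S). Since $u \in L^{r'}(\mu) \subset L^2_{\mathrm{loc}}(dx)$ (using $r' \ge 2$ and local boundedness of $\rho$), standard interior theory for divergence-form operators with bounded measurable coefficients (mollification plus a Caccioppoli estimate to upgrade to $H^{1,2}_{\mathrm{loc}}$, followed by Moser iteration for local boundedness) yields $u \in H^{1,2}_{\mathrm{loc}}(\R^d) \cap L^\infty_{\mathrm{loc}}(\R^d)$, and the weak form $\int \rho \nabla u \cdot \nabla \varphi\,dx + \alpha \int \rho u \varphi\,dx = 0$ becomes available for compactly supported $\varphi \in H^{1,2}_{\mathrm{loc}} \cap L^\infty_{\mathrm{loc}}$.

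The heart of the proof is the energy estimate. Take a standard cutoff $\eta_R \in C_0^\infty(\R^d)$ with $\eta_R \equiv 1$ on $B_R$, $\mathrm{supp}(\eta_R) \subset B_{2R}$ and $\|\nabla \eta_R\|_\infty \le C/R$. The function $\eta_R^2|u|^{r'-2}u$ is an admissible test function by the regularity just obtained. Substituting it, using $\nabla(|u|^{r'-2}u) = (r'-1)|u|^{r'-2}\nabla u$, and absorbing half of the positive $(r'-1)\int \rho \eta_R^2 |u|^{r'-2}|\nabla u|^2\,dx$ term via Young's inequality, one arrives at
\begin{equation*}
\alpha \int_{\R^d} \eta_R^2 |u|^{r'}\,d\mu \;\le\; \frac{C'}{R^2}\int_{B_{2R}\setminus B_R} |u|^{r'}\,d\mu \;\le\; \frac{C'}{R^2}\,\|u\|_{L^{r'}(\mu)}^{r'}.
\end{equation*}
Letting $R \to \infty$, the right-hand side vanishes while monotone convergence drives the left-hand side to $\alpha\int_{\R^d}|u|^{r'}\,d\mu$, forcing $u = 0$. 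This gives the desired $L^r$-uniqueness for all $r \in (1,2]$ simultaneously, and essential self-adjointness follows from the case $r = 2$ via \cite[Lemma 1.4]{Eb}.

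The main obstacle I anticipate lies in the elliptic-regularity step under the borderline assumptions (H) or (S): in (H) the drift $\rho^{-1}\nabla \rho$ is only in $L^d_{\mathrm{loc}}$ (critical Sobolev exponent, with $\rho$ not necessarily continuous), while in (S) this critical integrability holds only outside a large ball. The divergence-form Liouville argument itself needs only two-sided bounds on $\rho$, but verifying that a merely distributional $L^{r'}(\mu)$-solution is in $H^{1,2}_{\mathrm{loc}} \cap L^\infty_{\mathrm{loc}}$ at the borderline requires a careful combination of divergence-form and non-divergence-form elliptic results, exactly as advertised in the abstract. A pleasant feature, though, is that no global growth condition on $\mu$ is needed: the strictly positive $\alpha$ together with the global $L^{r'}(\mu)$-integrability of $u$ already suppresses the annular contribution as $R \to \infty$.
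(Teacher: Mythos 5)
Your overall architecture coincides with the paper's: reduce $L^r$-uniqueness by duality to showing that any $h\in L^{q}(\R^d,\mu)$ ($\tfrac1r+\tfrac1q=1$) with $\int(1-L)\varphi\cdot h\,d\mu=0$ vanishes, then establish local regularity of $h$, and finally kill $h$ with a weighted test function $\eta_R^2|h|^{q-2}h$ plus Young's inequality and a cutoff limit. Steps one and three are essentially identical to the paper's proof of Theorem \ref{mainthm} (the paper uses unit-width annular cutoffs $w_k$ and dominated convergence rather than the $C/R$ decay, but this is cosmetic).

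The genuine gap is in your regularity step, and it is precisely the point the whole paper is built to address. The hypothesis on $h$ is only the \emph{very weak} (distributional/adjoint) formulation $\int_{\R^d} h\,\bigl(\rho\Delta\varphi+\langle\nabla\rho,\nabla\varphi\rangle\bigr)dx=\int_{\R^d}\rho h\varphi\,dx$, in which all derivatives fall on $\varphi$ and on $\rho$. A Caccioppoli estimate presupposes that $h$ is already an $H^{1,2}_{loc}$ weak solution, i.e.\ that $\int\rho\langle\nabla h,\nabla\varphi\rangle dx$ makes sense; it cannot bootstrap a merely $L^2_{loc}$ very weak solution into $H^{1,2}_{loc}$. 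Mollification does not repair this: the commutator of convolution with the drift term $\langle\nabla\rho,\nabla\cdot\rangle$ is not controlled when $\nabla\rho$ is only in $L^d_{loc}$ (the critical exponent, assumption {\bf (H)}) or $L^{2+\varepsilon}_{loc}$ (assumption {\bf (S)}), and $\rho$ need not be Lipschitz or even continuous under {\bf (H)}. This upgrade is exactly the content of Lemma \ref{reglema}, Proposition \ref{prop:0.2} and Theorem \ref{maintheo1}: the paper works with the Dirichlet-form resolvent $(G^r_\alpha)_{\alpha>0}$ on balls, invokes the $H^{1,s}_0(B_r)$ well-posedness and maximum principle for divergence-form equations with VMO leading coefficient from \cite[Theorem 2.1.8, Corollary 2.1.6]{BKRS} together with $H^{2,2}$ theory from \cite{Gilbarg}, derives uniform energy bounds on $\chi\alpha G^r_\alpha f_n$, and passes to the limit via Banach--Alaoglu to conclude $\chi h\in D(\mathcal{E}^{0,B_r})$; continuity of $h$ then comes from Trudinger's $C^{0,\gamma}$ result \cite[Corollary 5.5]{T73}, not from Moser iteration applied to an as-yet-unjustified weak formulation. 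You correctly identified this as ``the main obstacle,'' but naming it is not the same as overcoming it: as written, the proposal assumes the hardest part of the theorem.
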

\text{}\\
Theorem \ref{mainthm} under the assumption {\bf (H)} with $d=3$ and Theorem \ref{mainthm} under the assumption {\bf (S)} are not covered by the literature mentioned above. The main ingredient to show Theorem \ref{mainthm} is the following elliptic regularity result Lemma \ref{mainlem}, which is a direct consequence of Theorem \ref{maintheo1}.
\text{}\\
\begin{lem} \label{mainlem}
Assume that either {\bf (H)}  or {\bf (S)} holds. Let $(L, C_0^{\infty}(\mathbb{R}^d))$ be defined as in \eqref{dirichletop}. If $h \in L_{loc}^2(\mathbb{R}^d)$ satisfies
$$
\int_{\mathbb{R}^d} (1-L)u \cdot h d\mu=0, \quad \forall u \in C_0^{\infty}(\mathbb{R}^d),
$$
then $h \in H^{1,2}_{loc}(\mathbb{R}^d) \cap C(\mathbb{R}^d)$.
\end{lem}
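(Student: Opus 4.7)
The plan is to reinterpret the hypothesis as a distributional elliptic equation in divergence form with uniformly elliptic bounded coefficient, then use interior regularity (supplied by Theorem \ref{maintheo1}) to upgrade $h$ from $L^2_{loc}$ to $H^{1,2}_{loc}$, and finally apply De Giorgi--Nash--Moser theory to obtain continuity. I read the displayed identity in the hypothesis as $\int_{\R^d}(1-L)u\cdot h\,d\mu = 0$ for every $u \in C_0^\infty(\R^d)$.

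First, I would observe that under either \textbf{(H)} or \textbf{(S)} the density $\rho$ lies in $H^{1,2}_{loc}(\R^d)$ and is bounded above and away from zero on every compact set by \eqref{bddpos}, so that the Leibniz-type identity
\[
\rho\, Lu \;=\; \rho \Delta u + \langle \nabla \rho, \nabla u\rangle \;=\; \mathrm{div}(\rho \nabla u)
\]
holds almost everywhere for every $u \in C_0^\infty(\R^d)$. Substituting into the hypothesis gives
\[
\int_{\R^d} \rho u h\,dx \;=\; \int_{\R^d} h\,\mathrm{div}(\rho \nabla u)\,dx, \qquad \forall u \in C_0^\infty(\R^d),
\]
which is precisely the statement that $h$ is a very weak (distributional) solution of $-\mathrm{div}(\rho \nabla h) + \rho h = 0$ on $\R^d$, with uniformly elliptic and bounded measurable coefficient $\rho$.

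Second, I would invoke Theorem \ref{maintheo1} to bootstrap $h$ from $L^2_{loc}$ to $H^{1,2}_{loc}$. Once $h \in H^{1,2}_{loc}(\R^d)$, the identity upgrades to the standard weak form
\[
\int_{\R^d} \rho\,\nabla u \cdot \nabla h\,dx + \int_{\R^d} \rho u h\,dx \;=\; 0, \qquad \forall u \in C_0^\infty(\R^d),
\]
so $h$ is a weak solution of a uniformly elliptic divergence form equation with bounded measurable coefficient. The classical De Giorgi--Nash--Moser theory (Moser iteration for $L^\infty$ bounds, followed by the oscillation/Hölder step) then yields local Hölder continuity of $h$, hence $h \in C(\R^d)$.

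The principal obstacle is the second step, namely passing from the very weak $L^2_{loc}$ identity to genuine $H^{1,2}_{loc}$ membership when the coefficient $\rho$ has only Sobolev (not Lipschitz) regularity. The natural implementation is to test the distributional equation against $u = \psi^2 h_\varepsilon$, where $\psi \in C_0^\infty(\R^d)$ is a cutoff and $h_\varepsilon$ is a mollification of $h$, then extract a Caccioppoli-type bound
\[
\int_{\R^d} \psi^2 |\nabla h_\varepsilon|^2\,dx \;\le\; C\int_{\R^d}(|\nabla\psi|^2 + \psi^2)\,h^2\,dx,
\]
and let $\varepsilon \downarrow 0$. This requires controlling the commutator between mollification and multiplication by $\rho$, which is precisely where the $H^{1,p}_{loc}$ hypotheses on $\rho$ (with $p = d$ under \textbf{(H)}, and with a local $p = 2+\varepsilon$ component inside $B_{N_0}$ under \textbf{(S)}) are used: they endow $\nabla\rho$ with enough integrability for the commutator terms to vanish in the limit. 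Once Theorem \ref{maintheo1} is in place, Lemma \ref{mainlem} follows directly as claimed.
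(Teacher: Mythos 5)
Your proof of the lemma itself is correct and is exactly the paper's: Lemma \ref{mainlem} is obtained by applying Theorem \ref{maintheo1} with the admissible choice $c\equiv 1\in L^d_{loc}(\R^d)$, and since that theorem already concludes $h\in H^{1,2}_{loc}(\R^d)\cap C(\R^d)$, nothing further is needed. Your separate continuity step is therefore redundant, though harmless --- it reproduces the last paragraph of the paper's proof of Theorem \ref{maintheo1}, where the weak divergence-form identity \eqref{basicdiveq} is derived by integration by parts and local H\"older continuity is taken from Trudinger's De Giorgi--Nash--Moser-type result \cite[Corollary 5.5]{T73}.

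One caveat on your closing paragraph: the mollification-plus-Caccioppoli scheme you sketch for the hard step (upgrading the very weak $L^2_{loc}$ identity to $H^{1,2}_{loc}$ membership) is \emph{not} how the paper proves Theorem \ref{maintheo1}, and as written it has a real gap. Testing with $u=\psi^2 h_\varepsilon$ produces the commutator term $\int (h-h_\varepsilon)\,\mathrm{div}(\rho\psi^2\nabla h_\varepsilon)\,dx$, which involves second derivatives of $h_\varepsilon$; these are not bounded uniformly in $\varepsilon$ by any a priori information on $h\in L^2_{loc}$, and the integrability of $\nabla\rho$ does not rescue this --- the obstruction is in $h$, not in $\rho$. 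The paper instead runs a duality-type argument: it solves the resolvent equation for the Dirichlet form on a ball $B_r$, proves $H^{1,s}_0(B_r)\cap H^{2,2}(B_r)$ regularity of $G^r_\alpha f$ for bounded $f$ (Lemma \ref{reglema}, using the VMO property of $\rho$ from Proposition \ref{vmoexte} and \cite[Corollary 2.1.6]{BKRS}), inserts these regular functions into the distributional identity, derives uniform $\mathcal{E}^{0,B_r}$-bounds on $\chi\alpha G^r_\alpha h$, and concludes by Banach--Alaoglu. If you intend to prove Theorem \ref{maintheo1} rather than cite it, you would need this (or a comparable a priori estimate for the adjoint problem) in place of the mollification commutator.
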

\text{}\\
We mention that under a weaker condition on $h$ than $h \in L^2_{loc}(\mathbb{R}^d)$ (precisely, under a condition on $h d\mu$ replaced by a locally finite measure $\nu$ satisfying $Lf \in L^1(\mathbb{R}^d, \nu)$ for all $f \in C_0^{\infty}(\mathbb{R}^d)$) and under the assumption {\bf ($\alpha^{\prime}$)}, a similar result to  Lemma \ref{mainlem} follows from \cite[Theorem 1]{BKR97}. But \cite[Theorem 1]{BKR97} does not imply Lemma \ref{mainlem}, since the conditions {\bf (H)} and {\bf (S)} are weaker than {($\alpha^{\prime}$)}.\\
In order to prove Lemma \ref{mainlem} (precisely, Theorem \ref{maintheo1}), we ultimately show that for an arbitrarily  given $r>0$, $h$ is the weak limit of a sequence of functions in $H^{1,2}(B_{r/2})$ by using the Banach–Alaoglu theorem, so that we obtain $h \in H^{1,2}_{loc}(\mathbb{R}^d)$. This idea originates from \cite[Theorem 2.1]{St99} (cf. \cite[Theorem 2.20]{LST22}), where Schauder regularity results on $\mathbb{R}^d$ (\cite[Theorems 4.3.1, 4.3.2]{Kr96}) are crucially used. But in contrast to \cite[Theorem 2.1]{St99}, we consider the resolvent operator on an open ball $B_r$ and a recent result for well-posedness for $H_0^{1,s}(B_r)$-solutions of elliptic equations of divergence type with $s \in (2, \infty)$ (\cite[Corollary 2.1.6]{BKRS}). 
Once $h \in H^{1,2}_{loc}(\mathbb{R}^d)$ is shown, we directly obtain $h \in C(\mathbb{R}^d)$ from integration by parts and a $C^{0, \gamma}$-regularity result  as in \cite[Corollary 5.5]{T73}. The rest of the proof of Theorem 1.1 is similar to the one of \cite[Theorem 2.3]{Eb}. To underline the novelty of our result, we provide various examples where the conditions {\bf($\alpha^{\prime}$)}, {\bf ($\beta$)} and {\bf ($\gamma$)} are not satisfied, but either {\bf (H)} or {\bf (S)} is fulfilled.\\
We raise an open problem of whether the essential self-adjointness of $(L, C_0^{\infty}(\mathbb{R}^d))$ holds or not under the assumption {\bf (H)} where $d \geq 3$ is replaced by $d=2$.
\section{Preliminaries}
We refer to \cite[Notations and Conventions]{LST22} for basic notations which are not defined in this paper. 
Let $s \in [1, \infty)$ and $U$ be an open subset of $\mathbb{R}^d$. Define a Sobolev space
$$
H^{2,s}(U) := \{f \in L^s(U): \partial_i f \in L^s(U), \, \partial_i \partial_j f \in L^s(U)  \text{ for all $1 \leq i,j \leq d$} \}
$$
equipped with the norm
$$
\|f\|_{H^{2,s}(U)} = \left(\int_U |f|^s dx  + \sum_{i=1}^d \int_U |\partial_i f|^s dx + \sum_{i,j=1}^d \int_U  |\partial_i \partial_j f|^s dx \right)^{\frac{1}{s}}.
$$
Denote by $L_{loc}^s(\overline{U})$ the set of all Lebesgue measurable functions $f$ on $\overline{U}$ for which $f \in L^s(V)$ for any bounded open subset $V$ in $\mathbb{R}^d$ with $\overline{V} \subset \overline{U}$. Define $H^{1,s}_{loc}(\overline{U}):=\{f \in L^s_{loc}(\overline{U}): \partial_i f \in L^s_{loc}(\overline{U}) \text{ for all $i=1, \ldots, d$} \}$. For $q \in [1, \infty]$ and $\bold{F} \in L^q(U, \mathbb{R}^d)$, we write 
$\| \bold{F} \|_{L^q(U)}:=\| \bold{F} \|_{L^q(U, \mathbb{R}^d)}$.
For $r>0$, denote by $B_r$ (resp. $B_r(z)$) the open ball in $\mathbb{R}^d$ centered at the origin (resp. $z$) with radius $r$. For $f \in L^1_{loc}(\mathbb{R}^d)$, we write $f \in VMO$ if there exists a positive continuous function $\omega$ on $[0, \infty)$ with $\omega(0)=0$ such that
\begin{equation} \label{vmoine}
\sup_{z \in \mathbb{R}^d, r <R} r^{-2d} \int_{B_r(z)} \int_{B_r(z)} |f(x)-f(y)|dx dy \leq \omega(R), \quad \forall R>0.
\end{equation}
For an integrable function $f$ and a bounded Borel measurable subset $A$ of $\mathbb{R}^d$, we write $f_{A} := \frac{1}{dx(A)}\int_{A} f dx$. For a function $g$ on a bounded open subset $U$ of $\mathbb{R}^d$, we write $g \in VMO(U)$ if there exists an extension of $g$ on $\mathbb{R}^d$, say again $g$, such that $g \in VMO$. The following are well-known facts (see \cite[the first paragraph of page 8]{BKRS}), but we provide a proof for the convenience of the reader.
\begin{prop} \label{vmoexte}
Let $U$ be a bounded open subset of $\mathbb{R}^d$.
\begin{itemize}
\item[(i)]
If $U$ has a Lipschitz boundary and $f \in H^{1,d}(U)$, then $f \in VMO(U)$.
\item[(ii)] 
If $f \in C(\overline{U})$, then $f \in VMO(U)$.
\end{itemize}
\end{prop}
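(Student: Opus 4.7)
The plan for both parts is to produce an extension $\tilde f$ of $f$ to all of $\R^d$ and verify the defining inequality \eqref{vmoine} directly, choosing in each case a suitable continuous modulus $\omega$. Part (ii) serves as the model. I would first invoke the Tietze extension theorem, then multiply by a compactly supported continuous cutoff equal to $1$ on $\overline{U}$, to obtain $\tilde f \in C_c(\R^d)$; this is automatically uniformly continuous with some non-decreasing modulus $\omega_{\tilde f}$ satisfying $\omega_{\tilde f}(0)=0$. For $x,y \in B_r(z)$ one has $|\tilde f(x)-\tilde f(y)| \leq \omega_{\tilde f}(2r)$, so
$$
r^{-2d}\!\int_{B_r(z)}\!\int_{B_r(z)}|\tilde f(x)-\tilde f(y)|\,dx\,dy \;\leq\; c_d^{2}\,\omega_{\tilde f}(2r), \qquad c_d := dx(B_1),
$$
and $\omega(R) := c_d^{2}\,\omega_{\tilde f}(2R)$ provides the required continuous modulus vanishing at $0$.

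For part (i), since $\partial U$ is Lipschitz, a standard Sobolev extension theorem (e.g.\ Stein's) yields $\tilde f \in H^{1,d}(\R^d)$ with $\tilde f|_U = f$; multiplying by a smooth cutoff equal to $1$ on a neighborhood of $\overline{U}$ makes the support compact without leaving $H^{1,d}(\R^d)$. On each ball $B_r(z)$ I would apply the Poincar\'e--Wirtinger inequality and H\"older to obtain
$$
\int_{B_r(z)} |\tilde f - \tilde f_{B_r(z)}|\,dx \;\leq\; C\, r \int_{B_r(z)}|\nabla \tilde f|\,dx \;\leq\; C'\, r^{d}\Bigl(\int_{B_r(z)}|\nabla \tilde f|^{d}\,dx\Bigr)^{1/d}.
$$
Combined with the elementary bound $\int_{B_r(z)}\!\int_{B_r(z)}|\tilde f(x)-\tilde f(y)|\,dx\,dy \leq 2\,dx(B_r(z))\int_{B_r(z)}|\tilde f - \tilde f_{B_r(z)}|\,dx$, this yields
$$
r^{-2d}\!\int_{B_r(z)}\!\int_{B_r(z)}|\tilde f(x)-\tilde f(y)|\,dx\,dy \;\leq\; C''\Bigl(\int_{B_r(z)}|\nabla \tilde f|^d\,dx\Bigr)^{1/d}.
$$

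Since $|\nabla \tilde f|^d \in L^1(\R^d)$, uniform absolute continuity of the Lebesgue integral gives that $\tilde \omega(R) := \sup_{z\in\R^d,\,r\leq R} C''\bigl(\int_{B_r(z)}|\nabla \tilde f|^d\,dx\bigr)^{1/d}$ is finite, non-decreasing, and satisfies $\tilde\omega(R)\to 0$ as $R\to 0^+$. To meet the continuity requirement, I would pass to the running average $\omega(R) := \frac{1}{R}\int_0^{2R}\tilde\omega(s)\,ds$ (with $\omega(0):=0$), which majorizes $\tilde\omega$ by monotonicity and is continuous with $\omega(0^+)=0$. The main technical point is the uniform-in-$z$ control in part (i): this is precisely where the compactly supported Lipschitz extension is essential, as it converts pointwise $L^d$-integrability of $\nabla \tilde f$ into a genuine uniform absolute-continuity statement over all centers $z$. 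Producing a continuous majorant $\omega \geq \tilde\omega$ is the only other slightly delicate step and is disposed of by the averaging trick above.
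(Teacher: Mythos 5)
Your proposal is correct and follows essentially the same route as the paper: in (i) a Sobolev extension for Lipschitz domains followed by the Poincar\'e--Wirtinger inequality and H\"older's inequality, and in (ii) Tietze extension plus uniform continuity. The only divergence is how you make the modulus $\omega$ continuous in (i) --- you regularize the monotone function $\tilde\omega$ by a running average, whereas the paper establishes joint continuity of $(z,t)\mapsto\int_{B_t(z)}\|\nabla f\|^d\,dx$ and uses the compact support of the extension to reduce the supremum over $z$ to a compact set; both devices are valid.
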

\begin{proof}
(i) First, by \cite[Theorem 4.7]{EG15} there exists an extension of $f$ on $\mathbb{R}^d$, say again $f$, such that $f \in H^{1,d}(\mathbb{R}^d)_0$. 
Let $z \in \mathbb{R}^d$ and $R>0$ be given and choose $r \in (0, R)$. Then by the Poincar\'{e} inequality (\cite[Theorem 4.9]{EG15}), there exists a constant $C_d>0$ such that
\begin{eqnarray*}
\int_{B_r(z)} |f(x)-f_{B_r(z)}|dx &\leq& C_d \cdot r \int_{B_r(z)} \| \nabla f\| dx \leq  C_d \cdot r\cdot dx(B_r(z))^{1-\frac{1}{d}}\left(\int_{B_r(z)}\| \nabla f\|^d dx \right)^{1/d} \\
&=& C_d \cdot dx(B_1)^{1-\frac{1}{d}} \, \cdot r^d  \left(\int_{B_r(z)}\| \nabla f\|^d dx \right)^{1/d}.
\end{eqnarray*}
Thus,
\begin{eqnarray*}
\hspace{-0.5em}\int_{B_r(z)} \int_{B_r(z)} |f(x)-f(y)|dx dy 
&\leq& 2 C_d dx(B_1)^{2-\frac{1}{d}} \cdot r^{2d} \left(\int_{B_r(z)}\| \nabla f\|^d dx \right)^{1/d}.
\end{eqnarray*}
Now define $\varphi(z,t) := 2 C_{d} \cdot dx(B_1)^{2-\frac{1}{d}} \cdot  \left(\int_{B_{t}(z)} \| \nabla f\|^d dx\right)^{1/d}$, $(z,t) \in \mathbb{R}^d \times [0, \infty)$. Then, $|\varphi|$ is bounded by $2 C_{d} \cdot dx(B_1)^{2-\frac{1}{d}} \cdot \| \nabla f\|_{L^d(\mathbb{R}^d)}$ on $\mathbb{R}^d \times [0, \infty)$. We can hence define
$$
\omega(t):= \sup_{z \in \mathbb{R}^d} \varphi(z, t), \quad t \in [0, \infty).
$$
Then $\omega$ is positive with $\omega(0)=0$ and \eqref{vmoine} holds. Now it remains to show that $\omega \in C([0, \infty))$. Note that $\varphi$ is continuous on $\mathbb{R}^d \times [0, \infty)$ by Lebesgue's Theorem, hence $\varphi$ is uniformly continous on any compact subset of $\mathbb{R}^d \times [0, \infty)$. Moreover, since $f$ has a compact support in $\mathbb{R}^d$, for any $\delta>0$ there exists a compact subset $V$ of $\mathbb{R}^d$ such that
$$
\omega(t)= \sup_{z \in V} \varphi(z, t), \quad t \in [0, \delta].
$$
Therefore, it follows that $\omega$ is continuous on $[0, \delta]$ as desired. \\
(ii) By the Tietze extension theorem, there exists an extension of $f$, say again $f$, such that $f \in C_0(\mathbb{R}^d)$. Thus, $f$ is uniformly continuous on $\mathbb{R}^d$. Let 
$$
\omega(t) = dx(B_1)^2 \cdot \sup_{z \in \mathbb{R}^d} \sup_{(x, y) \in \overline{B}_t(z) \times \overline{B}_t(z) } |f(x)-f(y)|.
$$
Then $\omega \in C([0, \infty))$ with $\omega(0)=0$ as desired.
\end{proof}
\\ \\
 For basic concepts about Dirichlet forms, resolvents, generators and semigroups, we refer to \cite[Chapter I and II. 1, 2]{MR} and \cite[Chapter 1]{FOT}. Assume that either {\bf (H)} or {\bf (S)} holds. Then $\frac{1}{\rho} \in L^1_{loc}(\mathbb{R}^d)$. Hence, by \cite[Chapter II. 2 a), d)]{MR} the bilinear form defined by
$$
\mathcal{E}^{0, B_{r}}(f,g) = \int_{B_{r}} \langle \nabla f, \nabla g  \rangle d\mu, \quad f, g \in C_0^{\infty}(B_{r})
$$
is closable on $L^2(B_{r}, \mu)$ and its closure $(\mathcal{E}^{0, B_{r}}, D(\mathcal{E}^{0, B_{r}}))$ is a symmetric Dirichlet form. Let 
$$
\mathcal{E}_1^{0, B_r}(f,g):=\mathcal{E}^{0, B_r}(f,g)+\int_{B_r} f g d\mu, \quad f,g \in D(\mathcal{E}^{0, B_r}).
$$
Then $D(\mathcal{E}^{0, B_{r}})$ equipped with the inner product $\mathcal{E}_1^{0, B_r}(\cdot, \cdot)$, say $(D(\mathcal{E}^{0, B_{r}}),\mathcal{E}_1^{0, B_r})$, is a Hilbert space whose norm is equivalent to $\|\cdot \|_{H^{1,2}_0(B_r)}$ and $D(\mathcal{E}^{0, B_{r}})=H^{1,2}_0(B_r)$, since $\rho$ is bounded below and above by strictly positive constants on $B_r$. Denote the corresponding sub-Markovian $C_0$-resolvent of contractions on $L^2(B_r, \mu)$ by  $(G^{r}_{\alpha})_{\alpha>0}$ and the corresponding generator on $L^2(\mathbb{R}^d, \mu)$ by $(L^{0, B_r}, D(L^{0, B_{r}}))$.

\section{Elliptic regularity results}
\begin{lem} \label{reglema}
Assume that either {\bf (H)} or {\bf (S)} holds and let $r>0$.  If $f \in L^{\infty}(B_r)$ and $\alpha>0$, then $G^r_{\alpha} f \in H^{1,s}_0(B_r) \cap H^{2, 2}(B_r)$ for any $s \in [2, \infty)$.
\end{lem}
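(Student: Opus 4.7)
The plan is to set $u := G^r_\alpha f$, view $u$ as a weak $H^{1,2}_0(B_r)$-solution of a divergence-form elliptic equation with $L^\infty$ right-hand side, bootstrap the Sobolev exponent from $2$ to arbitrary $s \in (2,\infty)$ using the $H^{1,s}_0$-theory cited as \cite[Corollary~2.1.6]{BKRS}, and then reformulate in non-divergence form to reach $H^{2,2}(B_r)$.

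First I would observe that $u \in H^{1,2}_0(B_r)$ by definition of the resolvent, and that the sub-Markovian contraction property of $(G^r_\alpha)_{\alpha>0}$ yields $u \in L^\infty(B_r)$ with $\|u\|_\infty \leq \alpha^{-1}\|f\|_\infty$. The defining identity of the resolvent rewrites as
\begin{equation*}
\int_{B_r} \rho \langle \nabla u, \nabla \varphi \rangle\, dx = \int_{B_r} \rho(f - \alpha u)\,\varphi\, dx, \qquad \forall\, \varphi \in H^{1,2}_0(B_r),
\end{equation*}
so $u$ is a weak solution of $-\mathrm{div}(\rho\nabla u) = g$ in $B_r$ with zero Dirichlet datum, where $g := \rho(f-\alpha u) \in L^\infty(B_r)$. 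Under {\bf (H)}, $\rho \in H^{1,d}(B_r)$ and Proposition~\ref{vmoexte}(i) give $\rho \in VMO(B_r)$; under {\bf (S)}, $\rho \in C(\overline{B}_r)$ and Proposition~\ref{vmoexte}(ii) give the same. Combined with $c_{B_r} \leq \rho \leq c_{B_r}^{-1}$ on $B_r$, the hypotheses of \cite[Corollary~2.1.6]{BKRS} are met, producing for each $s \in (2,\infty)$ a unique $v \in H^{1,s}_0(B_r)$ solving $-\mathrm{div}(\rho\nabla v) = g$. Since $v \in H^{1,s}_0(B_r) \subset H^{1,2}_0(B_r)$ and weak $H^{1,2}_0$-solutions of this equation are unique (standard energy argument combined with the Poincar\'e inequality), we get $u = v$, hence $u \in H^{1,s}_0(B_r)$ for all $s \in (2,\infty)$.

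For the $H^{2,2}$-part, I would pass to a non-divergence form identity. For $\varphi \in C_0^\infty(B_r)$ the function $\varphi/\rho$ has compact support in $B_r$, is bounded, and has weak gradient $\nabla\varphi/\rho - \varphi\nabla\rho/\rho^2 \in L^2(B_r)$ (using $1/\rho \in L^\infty(B_r)$ together with $\nabla\rho \in L^d(B_r)$ under {\bf (H)} or $\nabla\rho \in L^{2+\varepsilon}(B_r)$ under {\bf (S)}), so $\varphi/\rho \in H^{1,2}_0(B_r)$. Inserting it as a test function in the equation for $u$ and applying the product rule yields
\begin{equation*}
-\Delta u = (f - \alpha u) + \rho^{-1}\langle \nabla\rho, \nabla u\rangle \qquad \text{in } \mathcal{D}'(B_r).
\end{equation*}
Choosing $s$ large enough (any $s > \tfrac{2d}{d-2}$ under {\bf (H)}, or $s \geq \tfrac{2(2+\varepsilon)}{\varepsilon}$ under {\bf (S)}), H\"older's inequality gives $\rho^{-1}\langle \nabla\rho, \nabla u\rangle \in L^2(B_r)$, and since $f-\alpha u \in L^\infty(B_r)$, we obtain $\Delta u \in L^2(B_r)$. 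Because $u \in H^{1,2}_0(B_r)$ vanishes on the smooth boundary of $B_r$, standard $H^{2,2}$-regularity for the Dirichlet Laplacian on a ball then gives $u \in H^{2,2}(B_r)$.

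The main technical hurdle is the $H^{1,s}_0$-bootstrap: what unlocks \cite[Corollary~2.1.6]{BKRS} is precisely the $VMO$-regularity of $\rho$, which under {\bf (H)} comes from $H^{1,d}$-regularity while under {\bf (S)} is supplied instead by continuity, rescuing the argument despite the weaker Sobolev regularity of $\rho$ inside $B_{N_0}$. Once the $H^{1,s}_0$-step is in place, the passage to $H^{2,2}$ via the non-divergence identity is routine, the one subtlety being the admissibility of $\varphi/\rho$ as a test function, which again relies on $\nabla\rho \in L^p_{loc}$ with $p \geq 2$.
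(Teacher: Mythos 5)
Your proof is correct and follows essentially the same route as the paper: both hinge on Proposition \ref{vmoexte} to get $\rho \in VMO(B_r)$, on \cite[Corollary 2.1.6, Theorem 2.1.8]{BKRS} for the $H^{1,s}_0$ step, and on $H^{2,2}$-regularity for the Dirichlet Laplacian (the paper cites \cite[Theorem 9.15]{Gilbarg}) for the second step. The only cosmetic differences are that you identify solutions by an energy/Poincar\'e argument where the paper invokes the maximum principle, and that you derive the non-divergence identity directly by testing with $\varphi/\rho$ rather than constructing an auxiliary $H^{2,2}$ solution of $-\Delta u = g$ and matching it with $G^r_\alpha f$.
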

\begin{proof}
Since $\rho$ is bounded below and above by strictly positive constants on $B_r$, we have $D(\mathcal{E}^{0, B_r})=H_0^{1,2}(B_r)$. Let $r>0$, $f \in L^{\infty}(B_r)$ and $\alpha>0$. By \cite[I. Theorem 2.8]{MR}, $G_{\alpha} f \in D(\mathcal{E}^{0, B_r})=H_0^{1,2}(B_r)$ and
\begin{eqnarray} 
&&\int_{B_r}  \langle \rho \nabla G^r_{\alpha} f, \nabla \varphi   \rangle dx  +  \int_{B_r} \alpha \rho G_{\alpha}^r f \cdot \varphi dx  =\int_{B_r} \langle \nabla G^r_{\alpha} f, \nabla \varphi   \rangle d\mu  + \alpha \int_{B_r} G_{\alpha}^r f \cdot \varphi d\mu  \nonumber \\
&&=\mathcal{E}^{0,B_r}(G^r_{\alpha} f, \varphi) + \alpha \int_{B_r} G^{r}_{\alpha}f \cdot \varphi d\mu= \int_{B_r} f \cdot \varphi d\mu = \int_{B_r} \rho f \cdot \varphi dx, \qquad \forall \varphi \in C_0^{\infty}(B_r). \quad \qquad \label{variationaleq}
\end{eqnarray}
Note that $\rho \in VMO(B_r)$ by Proposition \ref{vmoexte} and that $0<\inf_{B_r} \rho \leq \sup_{B_r} \rho<\infty$ by the assumption. Since $\rho f \in L^{\infty}(B_r)$, it follows from \cite[Theorem 2.1.8, Corollary 2.1.6]{BKRS} that $G_{\alpha}^r f \in H_0^{1, s}(B_r)$ for any $s \in [2, \infty)$. Let
$$
g:= f+\Big \langle \frac{\nabla \rho}{\rho}, \nabla G^r_{\alpha} f \Big \rangle- \alpha G_{\alpha}^r f.
$$
Since $\rho \in H^{1,2+\varepsilon}_{loc}(\mathbb{R}^d)$ for some $\varepsilon>0$, we have $g \in L^{2}(B_r)$. Moreover, \eqref{variationaleq} is equivalent to
\begin{eqnarray} \label{variationaleq2}
&&\int_{B_r} \langle \rho \nabla G^r_{\alpha} f, \nabla \varphi   \rangle dx  +\int_{B_r} \Big \langle \nabla \rho, \nabla G^r_{\alpha} f \Big \rangle \varphi dx  = \int_{B_r}\rho g \cdot \varphi dx, \quad \forall \varphi \in C_0^{\infty}(B_r).
\end{eqnarray}
On the other hand, by \cite[Theorem 9.15]{Gilbarg} there exists $u \in H^{1,2}_0(B_r) \cap H^{2, 2}(B_r)$ such that
$$
- \Delta u = g, \quad \text{a.e. on $B_r$. }
$$
Applying integration by parts to the above, we then obtain
\begin{equation} \label{diveq}
\int_{B_r} \langle \rho \nabla u, \nabla \varphi \rangle dx  +\int_{B_r} \langle \nabla \rho, \nabla u \rangle  \varphi dx = \int_{B_r} \rho g\cdot \varphi dx.
\end{equation}
Using the maximum principle (\cite[Theorem 2.1.8]{BKRS}) with \eqref{variationaleq2} and \eqref{diveq}, we get $u=G_{\alpha}^r f$ in $H^{1,2}_0(B_r)$, hence $G_{\alpha}^r f \in H^{2,2}(B_r)$ as desired.
\end{proof}

\begin{prop} \label{prop:0.2}
Assume that either {\bf (H)} or {\bf (S)} holds and let $r>0$.  If $u \in H^{1,s}_0(B_r) \cap H^{2,2}(B_r)$ for any $s \in [2, \infty)$, then $u \in D(L^{0,B_r})$ and
$$
L^{0, B_r} u =\Delta u+ \Big \langle \frac{\nabla \rho}{\rho}, \nabla u \Big \rangle.
$$
\end{prop}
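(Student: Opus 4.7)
The plan is to use the standard characterization of the generator of a symmetric Dirichlet form: $u \in D(L^{0,B_r})$ if and only if $u \in D(\mathcal{E}^{0,B_r}) = H^{1,2}_0(B_r)$ and there exists $g \in L^2(B_r,\mu)$ with $\mathcal{E}^{0,B_r}(u,\varphi) = -\int_{B_r} g\varphi\,d\mu$ for every $\varphi \in D(\mathcal{E}^{0,B_r})$, in which case $L^{0,B_r} u = g$. The natural candidate is $g := \Delta u + \langle \tfrac{\nabla\rho}{\rho},\nabla u\rangle$. First I would check $g \in L^2(B_r,\mu)$: since $u \in H^{2,2}(B_r)$, $\Delta u \in L^2(B_r)$; and under either \textbf{(H)} or \textbf{(S)} one has $\nabla\rho \in L^{2+\varepsilon}(B_r)$ for some $\varepsilon>0$ (note that \textbf{(H)} gives $\nabla\rho \in L^d_{loc}$ with $d\geq 3$, and on the bounded set $B_r$ this embeds into $L^{2+\varepsilon}$). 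Choosing $s \in [2,\infty)$ with $\tfrac{1}{2+\varepsilon}+\tfrac{1}{s}=\tfrac{1}{2}$ and invoking the assumption $\nabla u \in L^s(B_r)$, Hölder's inequality yields $\langle\nabla\rho,\nabla u\rangle \in L^2(B_r)$. Since $\rho$ is bounded above and below by positive constants on $B_r$, the weighted and unweighted $L^2$ spaces coincide with equivalent norms, so $g \in L^2(B_r,\mu)$.

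The heart of the proof is the integration-by-parts identity
\begin{equation*}
\mathcal{E}^{0,B_r}(u,\varphi) \,=\, \int_{B_r} \rho\langle\nabla u,\nabla\varphi\rangle\,dx \,=\, -\int_{B_r} g\,\varphi\,d\mu, \qquad \forall\,\varphi \in H^{1,2}_0(B_r).
\end{equation*}
I would first establish this for $\varphi \in C_0^{\infty}(B_r)$ by applying the classical identity $\int_{B_r}\langle\nabla u,\nabla\psi\rangle\,dx = -\int_{B_r}(\Delta u)\psi\,dx$, valid for $u \in H^{2,2}(B_r)$ and any $\psi \in H^{1,2}_0(B_r)$ (via smooth approximation of $\psi$), with the choice $\psi := \rho\varphi$. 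The local $L^\infty$ bound on $\rho$ together with $\rho \in H^{1,2+\varepsilon}_{loc}(\R^d)$ ensures $\rho\varphi \in H^{1,2}_0(B_r)$ and justifies the Leibniz rule $\nabla(\rho\varphi) = \rho\nabla\varphi + \varphi\nabla\rho$. Substituting and rearranging yields
\begin{equation*}
\int_{B_r}\rho\langle\nabla u,\nabla\varphi\rangle\,dx \,=\, -\int_{B_r}\rho(\Delta u)\varphi\,dx \,-\, \int_{B_r}\varphi\langle\nabla\rho,\nabla u\rangle\,dx,
\end{equation*}
which is exactly $-\int_{B_r} g\varphi\,d\mu$ after writing $\rho\,dx = d\mu$ in the first integral and $dx = \rho^{-1}\,d\mu$ in the second.

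Finally, to extend the identity from $C_0^{\infty}(B_r)$ to all $\varphi \in D(\mathcal{E}^{0,B_r}) = H^{1,2}_0(B_r)$, I would invoke density together with continuity of both sides: the left-hand side is bounded by $\|\rho\nabla u\|_{L^2(B_r)}\|\nabla\varphi\|_{L^2(B_r)}$ and the right-hand side by $\|g\|_{L^2(B_r,\mu)}\|\varphi\|_{L^2(B_r,\mu)}$. This produces the required representation, whence $u \in D(L^{0,B_r})$ with $L^{0,B_r}u = \Delta u + \langle\tfrac{\nabla\rho}{\rho},\nabla u\rangle$. The main technical obstacle is the verification that $\rho\varphi \in H^{1,2}_0(B_r)$ with the product-rule gradient, so that integration by parts against the $H^{2,2}$-function $u$ is applicable; this hinges on the locally bounded and locally $H^{1,2+\varepsilon}$ nature of $\rho$ guaranteed by both \textbf{(H)} and \textbf{(S)}.
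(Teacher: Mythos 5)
Your proposal is correct and follows essentially the same route as the paper: integration by parts against $\rho\varphi$ for $\varphi\in C_0^{\infty}(B_r)$ to obtain $\mathcal{E}^{0,B_r}(u,\varphi)=-\int_{B_r}(\Delta u+\langle\frac{\nabla\rho}{\rho},\nabla u\rangle)\varphi\,d\mu$, a Hölder estimate using $\nabla u\in L^s(B_r)$ and $\nabla\rho\in L^{2+\varepsilon}(B_r)$ to place the candidate in $L^2(B_r,\mu)$, and the standard form-characterization of the generator (the paper cites \cite[Proposition 2.16]{MR} for this). Your additional details on the Leibniz rule for $\rho\varphi$ and the density extension are exactly the steps the paper leaves implicit.
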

\begin{proof}
Given $u \in H^{1,2}_0(B_r) \cap H^{2,2}(B_r)$ and $\varphi \in C_0^{\infty}(B_r)$, it follows from integration by parts that
$$
\mathcal{E}^{0, B_r}(u, \varphi)=\int_{B_r} \langle \nabla u, \nabla \varphi \rangle d\mu= -\int_{B_r} \left( \Delta u +\Big \langle \frac{\nabla \rho}{\rho}, \nabla u \Big \rangle \right) \varphi d\mu
$$
Since $u \in H^{2,2}(B_r)$, $\|\nabla u\| \in L^s(B_r)$ for any $s \in [2, \infty)$ and $\|\nabla \rho\| \in L^{2+\varepsilon}(B_r)$ for some $\varepsilon>0$, we have 
$$
\Delta u + \Big \langle \frac{\nabla \rho}{\rho}, \nabla u \Big \rangle \in L^2(B_r, \mu).
$$
Thus, the assertion holds by \cite[I. Proposition 2.16]{MR}.
\end{proof}

\begin{theo} \label{maintheo1}
Assume that either {\bf (H)} or {\bf (S)} holds. Let $(L, C_0^{\infty}(\mathbb{R}^d))$ be defined as in \eqref{dirichletop}. If $c \in L^d_{loc}(\mathbb{R}^d)$, $h \in L^2_{loc}(\mathbb{R}^d)$ and
\begin{equation} \label{distributionalde}
\int_{\mathbb{R}^d}(c-L) u \cdot h d\mu =0, \quad \forall u \in C_0^{\infty}(\mathbb{R}^d), 
\end{equation}
is satisfied, then $h \in H^{1,2}_{loc}(\mathbb{R}^d) \cap C(\mathbb{R}^d)$. Moreover, $h \in H^{1,s}_{loc}(\mathbb{R}^d)$ for all $s \in (2, \infty)$.
\end{theo}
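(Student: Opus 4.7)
My plan is to fix an arbitrary $r > 0$ and prove $h \in H^{1,2}(B_{r/2})$; the local regularity $h \in H^{1,2}_{loc}(\R^d)$ then follows by varying $r$. Following the strategy sketched in the introduction, I realize $h|_{B_{r/2}}$ as a weak $H^{1,2}_0(B_r)$-limit of resolvent images and invoke Banach--Alaoglu.

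The first step is to derive an integral identity by testing \eqref{distributionalde} against resolvent functions. For $\psi \in C_0^\infty(B_{r/2})$, set $v := G_1^r \psi$. By Lemma \ref{reglema} and Proposition \ref{prop:0.2}, $v \in H^{1,s}_0(B_r) \cap H^{2,2}(B_r)$ for every $s \in [2,\infty)$ and $Lv = v - \psi$ a.e.\ on $B_r$. Approximating $v$ by a sequence $u_n \in C_0^\infty(\R^d)$ in a sufficiently strong topology, substituting $u = u_n$ in \eqref{distributionalde}, and passing to the limit yields
\begin{equation}\label{eq:prop-identity}
\int \psi h \, d\mu = \int (1-c) G_1^r \psi \cdot h \, d\mu \qquad \forall \psi \in C_0^\infty(B_{r/2}).
\end{equation}
Both sides are finite: $G_1^r \psi \in L^\infty(B_r)$ by Sobolev embedding applied to $H^{1,s}_0(B_r)$ for $s > d$, and $(1-c)h \in L^{2d/(d+2)}(B_r)$ by H\"older (with $c \in L^d$, $h \in L^2$, $\rho \in L^\infty(B_r)$).

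Next, choose $f_n \in L^\infty(B_r)$ (for instance, pointwise truncations of $(1-c)h$) with $f_n \to (1-c)h$ in $L^{2d/(d+2)}(B_r)$, and set $h_n := G_1^r f_n$. By Lemma \ref{reglema}, $h_n \in H^{1,2}_0(B_r)$ is the unique weak solution of $-\mathrm{div}(\rho \nabla h_n) + \rho h_n = \rho f_n$. A Lax--Milgram argument, combined with the Sobolev embedding $H^{1,2}_0(B_r) \hookrightarrow L^{2d/(d-2)}(B_r)$ and its dual $L^{2d/(d+2)}(B_r) \hookrightarrow H^{-1,2}(B_r)$, yields the uniform bound $\|h_n\|_{H^{1,2}_0(B_r)} \leq C \|f_n\|_{L^{2d/(d+2)}(B_r)} \leq C'$. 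By symmetry of $G_1^r$ on $L^2(B_r, \mu)$ and \eqref{eq:prop-identity}, $\int \psi h_n \, d\mu \to \int \psi h \, d\mu$ for every $\psi \in C_0^\infty(B_{r/2})$, so $h_n \rightharpoonup h$ weakly in $L^2(B_{r/2}, \mu)$. Banach--Alaoglu extracts a weakly $H^{1,2}_0(B_r)$-convergent subsequence; its limit must coincide with $h$ on $B_{r/2}$ by uniqueness of weak $L^2$-limits, giving $h \in H^{1,2}(B_{r/2})$ and hence $h \in H^{1,2}_{loc}(\R^d)$.

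For the higher integrability, once $h \in H^{1,2}_{loc}$ the distributional identity $\mathrm{div}(\rho \nabla h) = c \rho h$ is a bona-fide $H^{1,2}_{loc}$-weak equation whose right-hand side gains integrability via Sobolev for $h$ together with $c \in L^d_{loc}$. Applying \cite[Corollary 2.1.6]{BKRS} to $\eta h$ for cutoffs $\eta \in C_0^\infty$ and iterating yields $h \in H^{1,s}_{loc}(\R^d)$ for every $s \in (2, \infty)$; then $h \in C(\R^d)$ follows from the Morrey embedding ($s > d$) or from the $C^{0,\gamma}$-regularity \cite[Corollary 5.5]{T73} applied to the divergence-form equation with $\rho$ locally bounded above and below. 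The main obstacle I anticipate is the passage to the limit in \eqref{eq:prop-identity}: $v$ is generally not approximable by $C_0^\infty(\R^d)$ in $H^{2,2}(B_r)$-norm, because the zero-extension $\tilde v$ carries a surface-delta in $\Delta \tilde v$ from the in-general nonzero normal derivative of $v$ on $\partial B_r$, making naive mollification blow up $\|\Delta u_n\|_{L^2}$. A careful workaround --- likely combining an inward cutoff $\chi_n v$ with mollification at matched rates and exploiting the extra regularity $L^{0, B_r} v = v - \psi \in L^2$ together with the form symmetry $\mathcal{E}^{0, B_r}(v, \cdot) = -\int L^{0, B_r} v \cdot (\cdot) \, d\mu$ on $H^{1,2}_0(B_r)$ --- is required to identify $\lim_n \int L u_n \cdot h \, d\mu$ with $\int (v - \psi) h \, d\mu$.
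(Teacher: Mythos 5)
Your overall strategy --- realizing $h$ on $B_{r/2}$ as a weak $H^{1,2}_0(B_r)$-limit of resolvent images and invoking Banach--Alaoglu --- is in the spirit of the paper, but the route fails at exactly the step you flag as the main obstacle, and that obstacle is not a technical nuisance removable by a ``careful workaround'': your key identity
$$
\int_{B_r}\psi h\,d\mu=\int_{B_r}(1-c)\,G_1^r\psi\cdot h\,d\mu,\qquad \psi\in C_0^\infty(B_{r/2}),
$$
is false in general. Take $\rho\equiv 1$, $c\equiv 1$ and $h(x)=e^{x_1}$; then $\int(u-\Delta u)h\,dx=0$ for all $u\in C_0^\infty(\R^d)$, so \eqref{distributionalde} holds, yet your identity would force $\int\psi e^{x_1}\,dx=0$ for all $\psi\in C_0^\infty(B_{r/2})$. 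The missing term is precisely the surface contribution $\int_{\partial B_r}\partial_\nu(G_1^r\psi)\,h\rho\,dS$: since $v=G_1^r\psi$ vanishes on $\partial B_r$ only in the trace sense and has nonzero normal derivative there, no sequence $u_n\in C_0^\infty(\R^d)$ can converge to $v$ in $H^{1,2}$ while $\Delta u_n$ stays bounded in $L^2$ and converges to $\Delta v$ (already in $d=1$ with $v(x)=x(1-x)$ on $(0,1)$ one has $\int_0^1 u_n''\,dx=0$ for every compactly supported smooth $u_n$, whereas $\int_0^1 v''\,dx=-2$). Consequently your $h_n=G_1^rf_n$ converge weakly to $G_1^r\bigl((1-c)h\bigr)$, the solution of a Dirichlet boundary-value problem on $B_r$, which differs from $h$ by a harmonic-type correction; in the example above that limit is $0$, not $e^{x_1}$. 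Everything downstream of the identity (the Lax--Milgram bound, the duality passage, the identification of the weak limit) therefore identifies the wrong function.

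The paper avoids this by never testing \eqref{distributionalde} against functions whose support reaches $\partial B_r$: it only uses test functions of the form $\chi^2 G_\alpha^r f_n$ with $\chi\in C_0^2(B_r)$, $\chi=1$ on $B_{r/2}$, for which mollification inside $B_r$ is legitimate. The price is the commutator terms $G_\alpha^rf_n\cdot L^{0,B_r}\chi^2+2\langle\nabla\chi^2,\nabla G_\alpha^rf_n\rangle$, and the bulk of the paper's proof consists of the uniform energy estimates $\sup_{\alpha>0}\mathcal{E}^{0,B_r}(\chi\alpha G_\alpha^rh,\chi\alpha G_\alpha^rh)<\infty$ needed to control them; $h$ is then recovered not as a resolvent image of some datum but as the strong $L^2$-limit of $\alpha G_\alpha^r h$ as $\alpha\to\infty$, which requires no boundary identity at all. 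If you insert the cutoff into your test function from the start, you must then estimate the resulting gradient terms such as $\int\langle\nabla\chi^2,\nabla G_1^r\psi\rangle h\,d\mu$ uniformly, which lands you essentially back at the paper's argument. Your final paragraph (higher integrability via \cite[Corollary 2.1.6]{BKRS}-type results and continuity via \cite[Corollary 5.5]{T73}) is sound and matches the concluding step of the paper's proof, but it presupposes $h\in H^{1,2}_{loc}$, which is exactly what the broken step was supposed to deliver.
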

\begin{proof}
If we assume {\bf (H)}, then choose an arbitrary $N_0 \in \mathbb{N}$. On the other hand, if we assume {\bf (S)}, then choose $N_0 \in \mathbb{N}$ as in {\bf(S)}. To show $h \in H^{1,2}_{loc}(\mathbb{R}^d)$, it suffices to show that given $r>2N_0$, $h \in H^{1,2}(B_{r/2})$. Let $r>2N_0$ and $v \in H^{1,s}_0(B_r) \cap
 H^{2,2}(B_r)$ for any $s \in [2, \infty)$ with $\text{supp}(v) \subset B_{r}$. Then, by mollification, there exists $(v_n)_{n \geq 1} \subset C_0^{\infty}(B_r)$ such that 
$$
\lim_{n \rightarrow \infty} v_n =v \;\; \text{ in $H^{2,2}(B_r)$} \quad \text{ and } \quad \lim_{n \rightarrow \infty} v_n =v \;\; \text{ in } H^{1, s}(B_r) \quad \text{ for any $s \in [2, \infty)$. }
$$
Since $\| \nabla \rho \| \in L^{2+\varepsilon}(B_r)$ for some $\varepsilon>0$ and $0<\inf_{B_r} \rho \leq \sup_{B_r} \rho <\infty$, it follows from Proposition \ref{prop:0.2} and \eqref{distributionalde} that we have $v \in D(L^{0, B_r})$ and that
\begin{equation} \label{mainequ}
\int_{B_r} \left( cv -L^{0,B_r} v \right) h d\mu=  \int_{B_r}\left( cv-\Big \langle \frac{\nabla \rho}{\rho}, \nabla v \Big \rangle - \Delta v \right) h d\mu=0.
\end{equation}
Let $f_n \in L^{\infty}(B_r)$ be such that $\lim_{n \rightarrow \infty}f_n = h$ in $L^2(B_{r}, \mu)$ with
$\|f_n\|_{L^2(B_r, \mu)} \leq \|h\|_{L^2(B_r, \mu)}$ for all $n \in \mathbb{N}$. Let $\chi \in C_0^{2}(B_{r}) \subset  D(L^{0, B_r})_{b}$ with $\chi =1$ on $B_{r/2}$. Given $\alpha>0$, it holds $G^r_{\alpha} f_n \in D(L^{0, B_r})_b$, hence it follows from \cite[Theorem 4.2.1]{BH} that $\chi  G^{r}_{\alpha} f_n \in D(L^{0, B_r})_{b} \subset D(\mathcal{E}^{0, B_r})$ and that
\begin{eqnarray}
L^{0, B_r}(\chi G^{r}_{\alpha} f_n) &=&  G^{r}_{\alpha} f_n \cdot L^{0, B_r} \chi+ 2\langle \nabla \chi, \nabla G^{r}_{\alpha}f_n \rangle + \chi \cdot L^{0, B_r} G^{r}_{\alpha} f_n  \label{algebraiden} \\
&=&   G^{r}_{\alpha} f_n \cdot L^{0, B_r} \chi + 2\langle \nabla \chi, \nabla G^{r}_{\alpha}f_n \rangle +(\chi \alpha G_{\alpha}^r f_n -\chi f_n). \nonumber
\end{eqnarray}
Therefore,
\begin{eqnarray}
&&\mathcal{E}^{0, B_r} \left( \chi \alpha G^r_{\alpha} f_n, \chi \alpha G^r_{\alpha} f_n   \right)  = -\int_{B_r} L^{0, B_r}(\chi \alpha G^r_{\alpha} f_n) \cdot \chi \alpha G^r_{\alpha} f_n   d\mu \nonumber \\
&&\;\;= - \int_{B_r} \alpha G^r_{\alpha} f_n \cdot \left(L^{0, B_r} \chi \right) \cdot \chi \alpha G^r_{\alpha} f_n d\mu -2\int_{B_r} \langle \nabla \chi, \nabla \alpha G^{r}_{\alpha}f_n \rangle \chi \alpha G^r_{\alpha} f_n d\mu \nonumber \\
&& \;\;\; \quad -\alpha \int_{B_r} (\chi \alpha G_{\alpha}^r f_n -\chi f_n) \chi \alpha G^r_{\alpha} f_n d\mu  \nonumber  \\
&& \;\; =  - \int_{B_r} \alpha G^r_{\alpha} f_n \cdot \left(L^{0, B_r} \chi \right) \cdot \chi \alpha G^r_{\alpha} f_n d\mu  -2\int_{B_r} \langle  \nabla \chi, \nabla(\chi \alpha G_{\alpha}^r f_n)  \rangle \alpha G^r_{\alpha} f_n d\mu  \nonumber \\
&& \qquad \quad +2\int_{B_r} \langle  \nabla \chi, \nabla \chi  \rangle (\alpha G^r_{\alpha} f_n)^2 d\mu  -\alpha \int_{B_r} (\chi \alpha G_{\alpha}^r f_n -\chi f_n) \chi \alpha G^r_{\alpha} f_n d\mu.  \label{weakconverest}
\end{eqnarray}
Note that by Proposition \ref{prop:0.2}, $\chi \in D(L^{0, B_r})$ and  $L^{0, B_r} \chi =\Delta \chi + \langle \frac{\nabla \rho}{\rho}, \nabla \chi \rangle$. Since $L^{0, B_r} \chi =0$ on $B_{\frac{r}{2}}$, $\|\frac{\nabla \rho}{\rho}\| \in L^d(B_r \setminus \overline{B}_{N_0})$ and $\frac{r}{2}>N_0$, we obtain $L^{0, B_r} \chi \in L^d(B_r, \mu)$. Using the H\"{o}lder inequality, $L^2(B_r, \mu)$-contraction property of $(G^{r}_{\alpha})_{\alpha>0}$ and the Sobolev inequality, we obtain
\begin{eqnarray}
&&- \int_{B_r} \alpha G^r_{\alpha} f_n \cdot \left(L^{0, B_r} \chi \right) \cdot \chi \alpha G^r_{\alpha} f_n d\mu  \nonumber \\
&&\;\; \quad \leq \| \alpha G^r_{\alpha} f_n \|_{L^2(B_r, \mu)} \| L^{0, B_r} \chi \|_{L^d(B_r, \mu)} \| \chi \alpha G^r_{\alpha} f_n  \|_{L^{\frac{2d}{d-2}}(B_r, \mu)} \nonumber \\
&&\;\; \quad \leq \underbrace{ \|h\|_{L^2(B_r, \mu)} \| L^{0, B_r} \chi \|_{L^d(B_r, \mu)}}_{=:C_1} \cdot K_{d,r, \rho}  \, \mathcal{E}^{0, B_r}(\chi \alpha G^r_{\alpha} f_n, \chi \alpha G^r_{\alpha} f_n  )^{1/2}, \label{estim1}
\end{eqnarray}
where $K_{d,r, \rho}>0$ is a constant which only depends on $d$, $r$ and $\rho$. Using  the H\"{o}lder inequality and $L^2(B_r, \mu)$-contraction property of $(G^{r}_{\alpha})_{\alpha>0}$,
 we get
\begin{eqnarray}
&& -2 \int_{B_r} \langle  \nabla \chi, \nabla(\chi \alpha G_{\alpha}^r f_n)  \rangle \alpha G^r_{\alpha} f_n d\mu \nonumber \\
&& \quad \leq 2\, \underbrace{\| \nabla \chi \|_{L^{\infty}(B_r)} \|h\|_{L^2(B_r, \mu)}}_{=:C_2}  \,  \mathcal{E}^{0, B_r}(\chi \alpha G^r_{\alpha} f_n, \chi \alpha G^r_{\alpha} f_n  )^{1/2}. \label{estim2}
\end{eqnarray}
Therefore, we obtain
\begin{eqnarray*}
\mathcal{E}^{0, B_r} \left( \chi \alpha G^r_{\alpha} f_n, \chi \alpha G^r_{\alpha} f_n   \right)  \leq C_3 \, \mathcal{E}^{0, B_r} \left( \chi \alpha G^r_{\alpha} f_n, \chi \alpha G^r_{\alpha} f_n   \right)^{1/2}+C_4,
\end{eqnarray*}
where $C_3=C_1 K_{d,r, \rho} +2 C_2$ and  $C_4=2C_2^2 +2\alpha \| \chi\|^2_{\infty} \|h\|^2_{L^2(B_r, \mu)}$. Hence, it follows by Young's inequality that
$$
\sup_{n \geq 1} \mathcal{E}^{0, B_r} \left( \chi \alpha G^r_{\alpha} f_n, \chi \alpha G^r_{\alpha} f_n   \right) \leq C_3^2 +2C_4.
$$
By using $L^2(\mathbb{R}^d, \mu)$-contraction propoerty of $(G^r_{\alpha})_{\alpha>0}$ and the Banach-Alaoglu Theorem, there exists a subsequence of $(f_n)_{n \geq 1}$, say again $(f_n)_{n \geq 1}$, such that
\begin{equation} \label{weakconve1}
\lim_{n \rightarrow \infty} \chi \alpha G^r_{\alpha}f_n =   \chi \alpha G^r_{\alpha}h, \quad \text{ weakly in $(D(\mathcal{E}^{0, B_r}), \mathcal{E}_1^{0, B_r})$.}
\end{equation}
Meanwhile, note that $(\alpha-L^{0, B_r})G_{\alpha}^r f_n =f_n$ and that by Lemma \ref{reglema},
$\chi^2 G^r_{\alpha} f_n \in H^{1,s}_0(B_r) \cap
H^{2,2}(B_r)$ for any $s \in [2, \infty)$ with $\text{supp}(\chi^2 G^r_{\alpha} f_n ) \subset B_r$. Thus, we obtain from \eqref{algebraiden} and \eqref{mainequ} that
\begin{eqnarray}
&& - \alpha\int_{B_r} (\alpha G_{\alpha}^r f_n -f_n) h\chi^2 d\mu = \int_{B_r}- L^{0, B_r} (\alpha G^r_{\alpha} f_n) \cdot h \chi^2  \,d\mu \nonumber \\
&& \quad = \int_{B_r}  - L^{0, B_r} (\chi^2 \alpha G^r_{\alpha} f_n) \cdot h d\mu+ \int_{B_r}  \alpha G^{r}_{\alpha} f_n \cdot (L^{0, B_r} \chi^2) h d\mu + 2\int_{B_r} \langle \nabla \chi^2, \nabla \alpha G^{r}_{\alpha}f_n \rangle h d\mu \nonumber \\
&&\quad = \int_{B_r}-c \chi^2 \alpha G^r_{\alpha} f_n \cdot h d\mu+ \int_{B_r}  \alpha G^{r}_{\alpha} f_n \cdot (L^{0, B_r} \chi^2) h d\mu + 2\int_{B_r} \langle \nabla \chi^2, \nabla \alpha G^{r}_{\alpha}f_n \rangle h d\mu. \label{resesteq}
\end{eqnarray}
Since 
$$
-\alpha \int_{B_r} (\alpha G^r_{\alpha} h - h)^2 \chi^2 d\mu \leq 0,
$$
it follows by \eqref{resesteq}, \eqref{weakconve1} and the Sobolev inequality that
\begin{eqnarray}
&&-\alpha \int_{B_r} (\alpha G^r_{\alpha}h - h) \alpha G^r_{\alpha} h \cdot \chi^2 d\mu \leq- \alpha \int_{B_r} (\alpha G^r_{\alpha} h -h) h \chi^2 d\mu  \nonumber \\
&& \quad \quad = \lim_{n \rightarrow \infty}  - \alpha \int_{B_r} (\alpha G^r_{\alpha} f_n -f_n) h \chi^2 d\mu \nonumber  \\
&&\quad \quad  = \lim_{n \rightarrow \infty}  \left( -\int_{B_r}c \chi^2 \alpha G^r_{\alpha} f_n \cdot h d\mu+ \int_{B_r}  \alpha G^{r}_{\alpha} f_n \cdot (L^{0, B_r} \chi^2) h d\mu + 2\int_{B_r} \langle \nabla \chi^2, \nabla \alpha G^{r}_{\alpha}f_n \rangle h d\mu \right) \nonumber  \\
&&\quad \quad  = -\int_{B_r}c \chi^2 \alpha G^r_{\alpha} h \cdot h d\mu+ \int_{B_r}  \alpha G^{r}_{\alpha} h \cdot (L^{0, B_r} \chi^2) h d\mu + \int_{B_r} \langle \nabla \chi^2, \nabla \alpha G^{r}_{\alpha} h \rangle h d\mu \nonumber  \\
&& \quad \quad \leq   C_5 \mathcal{E}^{0, B_r} (\chi \alpha G^r_{\alpha} h, \chi \alpha G^r_{\alpha} h)^{1/2}, \qquad \qquad \label{fundestimated}
\end{eqnarray}
where $C_5=\left(  K_{d,r, \rho}\|\chi c\|_{L^d(B_r)} +   K_{d,r, \rho}\|L^{0, B_r} \chi^2\|_{L^d(B_r)} + \|\nabla \chi^2 \|_{L^\infty(B_r)} \right) \|h\|_{L^2(B_r, \mu)}$. By using \eqref{weakconve1}, \eqref{weakconverest}, \eqref{fundestimated} and applying the estimates \eqref{estim1}, \eqref{estim2} to the case where $f_n$ is replaced by $h$,  it follows
\begin{eqnarray*}
&&\mathcal{E}^0(\chi \alpha G^r_{\alpha}h, \chi \alpha G^r_{\alpha}h ) \leq \liminf_{n \rightarrow \infty} \mathcal{E}^0(\chi \alpha G^r_{\alpha}f_n, \chi \alpha G^r_{\alpha}f_n ) \\
&& \qquad  =- \int_{B_r} \alpha G^r_{\alpha} h \cdot \left(L^{0, B_r} \chi \right) \cdot \chi \alpha G^r_{\alpha} h d\mu -2\int_{B_r} \langle  \nabla \chi, \nabla(\chi \alpha G_{\alpha}^r h)  \rangle \alpha G^r_{\alpha} h d\mu \\
&& \qquad \qquad+ 2\int_{B_r} \langle  \nabla \chi, \nabla \chi  \rangle (\alpha G^r_{\alpha} h)^2 d\mu  -\alpha \int_{B_r} (\chi \alpha G_{\alpha}^r h-\chi h) \chi \alpha G^r_{\alpha}h  d\mu   \\
&& \qquad \qquad \leq (C_3+C_5) \mathcal{E}^{0, B_r}(\chi \alpha G^r_{\alpha}h, \chi \alpha G^r_{\alpha}h)^{1/2} +2C^2_2.
\end{eqnarray*}
Thus by Young's inequality, it holds
$$
\sup_{\alpha>0} \mathcal{E}^0(\chi \alpha G^r_{\alpha}h, \chi \alpha G^r_{\alpha}h ) \leq (C_3+C_5)^2+4C_2^2
$$
Finally, using the Banach-Alaoglu Theorem and the strong continuity of $(G^r_{\alpha})_{\alpha>0}$ in $L^2(B_r, \mu)$, we obtain $\chi h \in D(\mathcal{E}^{0, B_r})$, so that $h \in H^{1,2}(B_{r/2})$ as desired. \\
Finally using integration by parts, we conclude that \eqref{distributionalde} implies that
\begin{equation} \label{basicdiveq}
\int_{\mathbb{R}^d} \langle \rho \nabla h, \nabla \varphi \rangle dx +\int_{\mathbb{R}^d} c \rho h \cdot \varphi dx=0, \quad \varphi \in C_0^{\infty}(\mathbb{R}^d),
\end{equation}
hence $h$ is locally H\"{o}lder continuous on $\mathbb{R}^d$ by \cite[Corollary 5.5]{T73}.  Moreover, applying \cite[Theorem 1.8.3]{BKRS} to \eqref{basicdiveq} with Proposition \ref{vmoexte}, we obtain $h \in H^{1,s}_{loc}(\mathbb{R}^d)$ for all $s \in (2, \infty)$.
\end{proof}  

\section{Strong uniqueness}

\begin{proof}{\bf of Theorem \ref{mainthm}}\;\; Given $r \in (1, 2]$, choose  $q \in [2, \infty)$ so that $\frac{1}{r}+\frac{1}{q}=1$. Let $h \in L^q(\mathbb{R}^d, \mu)$ be such that
\begin{equation*} 
\int_{\mathbb{R}^d} (1-L)u \cdot h\, d\mu = 0, \qquad \forall u \in C_0^{\infty}(\mathbb{R}^d). 
\end{equation*} 
In order to show $L^r(\mathbb{R}^d, \mu)$-uniqueness of $(L, C_0^{\infty}(\mathbb{R}^d))$, it is enough to show $h=0$, $\mu$-a.e. (cf. \cite[Remark 3.4]{LT21in}).  By Theorem \ref{maintheo1} and integration by parts and an approximation, we obtain $h \in H^{1,2}_{loc}(\mathbb{R}^d) \cap C(\mathbb{R}^d)$ and
\begin{equation} \label{underlying}
\int_{\mathbb{R}^d} \langle \rho \nabla h, \nabla \varphi \rangle dx +\int_{\mathbb{R}^d} \rho h \cdot \varphi dx=0, \quad \varphi \in H^{1,2}(\mathbb{R}^d)_0.
\end{equation}
Since the map $\psi: \mathbb{R} \rightarrow \mathbb{R}$ defined by $\psi(t)= t|t|^{q-2}$, $t \in \mathbb{R}$ satisfies $\psi \in C^1(\mathbb{R})$ with $\psi'(t)=(q-1)|t|^{q-2}$ for all $t \in \mathbb{R}$ and $h \in C(\mathbb{R}^d)$, it holds by the chain rule (cf. \cite[Theorem 4.4(ii)]{EG15}), $h|h|^{q-2} \in H^{1,2}_{loc}(\mathbb{R}^d) \cap C(\mathbb{R}^d)$ and 
$$
\nabla(h |h|^{q-2}) = (q-1) |h|^{q-2}\nabla h.
$$
Let $v \in H^{1, \infty}(\mathbb{R}^d)_0$. By replacing $\varphi$ in \eqref{underlying} with $v h |h|^{q-2} \in H^{1,2}_0(\mathbb{R}^d, \mu)_0$, it follows that
\begin{eqnarray} \label{inteineq}
\int_{\mathbb{R}^d} v|h|^q d\mu &=&-(q-1)\int_{\mathbb{R}^d} \langle \nabla h, \nabla h \rangle |h|^{q-2} vd\mu  - \int_{\mathbb{R}^d} \langle \nabla v, \nabla h \rangle h |h|^{q-2} d\mu 
\end{eqnarray}
Choose $v = w^2$, where $w \in H^{1,\infty}(\mathbb{R}^d)_0$. Then \eqref{inteineq} and  the Cauchy-Schwarz inequality and Young's inequality ($|a b| \leq \frac{q-1}{4}{a^2} +\frac{1}{(q-1)}b^2$)
implies
\begin{eqnarray*}
&&\int_{\mathbb{R}^d} w^2|h|^q d\mu + (q-1)\int_{\mathbb{R}^d} \|\nabla h\|^2 |h|^{q-2} w^2 d\mu  \leq  - 2\int_{\mathbb{R}^d} \langle \nabla w, \nabla h \rangle wh|h|^{q-2} d\mu  \\
&&\leq2  \left(\int_{\mathbb{R}^d} \|\nabla h\|^2 |h|^{q-2} w^2d\mu   \right)^{1/2} \left(\int_{\mathbb{R}^d} \|\nabla w\|^2   |h|^{q} d\mu \right)^{1/2} \\
&& \leq \frac{q-1}{2} \int_{\mathbb{R}^d} \|\nabla h\|^2 |h|^{q-2} w^2d\mu +\frac{2}{(q-1)} \int_{\mathbb{R}^d} \|\nabla w\|^2   |h|^{q} d\mu,
\end{eqnarray*}
hence it follows that
\begin{equation} \label{mainineq}
\int_{\mathbb{R}^d} w^2|h|^q d\mu  \leq \frac{2}{(q-1)} \int_{\mathbb{R}^d} \|\nabla w\|^2   |h|^{q} d\mu.
\end{equation}
Given $k \in \mathbb{N}$, we define
$$
w_k(x):= \min\Big( ( k+1-\|x\|  )^+, 1 \Big), \quad x \in \mathbb{R}^d.
$$
Then $w_k \in H^{1, \infty}(\mathbb{R}^d)_0 \cap C(\mathbb{R}^d)$ with $\|\nabla w_k\|_{L^{\infty}(\mathbb{R}^d)} \leq 1$ and $w_k =1$ on $B_k$. By substituting $w_k$ for $w$ in \eqref{mainineq}, we obtain
\begin{equation*}
\int_{B_k} |h|^q d\mu \leq \int_{\mathbb{R}^d} w_k^2|h|^q d\mu  \leq \frac{2}{(q-1)} \int_{\mathbb{R}^d} \|\nabla w_k\|^2   |h|^{q} d\mu.
\end{equation*}
Since $\lim_{k \rightarrow \infty} \|\nabla w_k \|=0$, $\mu$-a.e., the above implies $\int_{\mathbb{R}^d} |h|^q d\mu= \lim_{k \rightarrow \infty} \int_{B_k} |h|^q d\mu = 0$ by  Lebesgue's theorem. Therefore, $h=0$ in $L^q(\mathbb{R}^d, \mu)$, so that $(L, C_0^{\infty}(\mathbb{R}^d))$ is $L^r(\mathbb{R}^d, \mu)$-unique. Since the essential self-adjointness of $(L, C_0^{\infty}(\mathbb{R}^d))$ is equivalent to $L^2(\mathbb{R}^d, \mu)$-uniqueness of $(L, C_0^{\infty}(\mathbb{R}^d))$ by \cite[Lemma 1.4]{Eb}, the assertion follows.
\end{proof}

\begin{rem}
\begin{itemize}
\item[(i)]
The assumption {\bf (H)} does not guarantee $L^1(\mathbb{R}^d, \mu)$-uniqueness of $(L, C_0^{\infty}(\mathbb{R}^d))$. Indeed, if $\rho(x)=e^{\|x\|^{2+\varepsilon}}$, $x \in \mathbb{R}^d$ for some $\varepsilon>0$, then it is known that the corresponding semigroup generated by the closure of $(L, C_0^{\infty}(\mathbb{R}^d))$ in $L^2(\mathbb{R}^d, \mu)$ is not conservative, hence it follows from \cite[Corollary 2.3]{St99} that $(L, C_0^{\infty}(\mathbb{R}^d))$ is not $L^1(\mathbb{R}^d, \mu)$-unique. 
\item[(ii)]
Assume that {\bf ($\alpha^{\prime}$)} (in Section \ref{intro}) with $d=2$ holds. Analogously to the proofs of Theorems \ref{maintheo1} and \ref{mainthm}, we can show that $(L, C_0^{\infty}(\mathbb{R}^d))$ defined as in \eqref{dirichletop} is $L^r(\mathbb{R}^d, \mu)$-unique for any $r \in (1, 2]$ and $(L, C_0^{\infty}(\mathbb{R}^d))$ is essentially self-adjoint. But we point out that under the assumption {\bf ($\alpha^{\prime}$)} with $d=2$, the essential self-adjointness of $(L, C_0^{\infty}(\mathbb{R}^d))$ was already shown in \cite[Theorem 7]{BKR97}.
\item[(iii)]
In case $d=1$, it does not seem easy to generalize  \cite[2.3 Theorem]{W85} where the essential self-adjointness of $(L, C_0^{\infty}(\mathbb{R}))$ defined as in \eqref{dirichletop} is shown under the assumption that $\rho \in H^{1,2}_{loc}(\mathbb{R}) \cap C(\mathbb{R})$ with $\rho(x)>0$ for all $x \in \mathbb{R}$.
\end{itemize}
\end{rem}
\text{}\\
In the following examples, we will present a specific $\rho$ for which either {\bf (H)} or {\bf (S)} holds, but {\bf ($\alpha^{\prime}$)}, {\bf ($\beta$)} and {\bf ($\gamma$)} in Section \ref{intro} are not fulfilled.
\begin{exam}
\begin{itemize}
\item[(i)]
Define
$$
\rho(x):= \left\{\begin{matrix}
1, && \quad x =0 \\
1+\frac{-1}{\ln \|x\|}, &&\quad x \in B_{\frac12} \\ 
1+\frac{2}{\ln 2} \|x\|, &&\quad x \in \mathbb{R}^d \setminus B_{\frac12}
\end{matrix}\right.
$$
Then $\rho \in C(\mathbb{R}^d) \cap C^{\infty}(\mathbb{R}^d \setminus \overline{B}_{1/2})$ with $\rho(x) \geq 1$ for all $x \in \mathbb{R}^d$, hence \eqref{bddpos} holds. On the other hand, we claim that $\rho \in H^{1,d}_{loc}(\mathbb{R}^d)$ but $\rho \notin \cup_{p \in (d, \infty)}H_{loc}^{1,p}(\mathbb{R}^d)$. To show the claim, let
$$
\psi(t):= \left\{\begin{matrix}
1, && \quad t =0 \\
1+\frac{-1}{\ln t}, &&\quad t \in (0,\frac12) \\ 
1+\frac{2}{\ln 2} t, &&\quad t \in [\frac12, \infty)
\end{matrix}\right.
$$
Then it holds $\psi \in C([0, \infty))$ and $\rho = \psi(\|x\|)$, hence $\rho \in C(\mathbb{R}^d)$. Note that by the chain rule, 
$$
\nabla \rho(x):= \left\{\begin{matrix}
\frac{1}{\|x\|(\ln \|x\|)^2\,} \frac{x}{\|x\|}  &&\quad \text{ for a.e. } x \in B_{\frac12} \\[5pt]
\frac{2}{\ln 2} \frac{x}{\|x\|}, &&\quad \text{  for a.e. } x \in \mathbb{R}^d \setminus B_{\frac12}.
\end{matrix}\right.
$$
Indeed, let $\beta \in [0, \infty)$ be a constant. Then,
\begin{eqnarray*}
&&\int_{B_{\frac12}}\| \nabla \rho \|^{d+\beta} dx = \int_0^{\frac12} \int_{\partial B_r} \left( \frac{1}{\|x\|(\ln \|x\|)^2\,} \right)^{d+\beta} dS \,dr \\
&&= c_{d} \int_0^{\frac12} \frac{1}{r^{\beta}(\ln r)^{2d+2\beta} } \, \frac{1}{r} dr  = c_d \int_{-\infty}^{-\ln2} \frac{1}{t^{2d+2\beta}} \frac{1}{e^{\beta t}} dt \\
&& = c_d \int_{\ln2}^{\infty} \frac{e^{\beta t}}{t^{2d+2\beta}} dt <\infty, \qquad \text{(where $c_d = dx(B_1) \cdot d$)},
\end{eqnarray*}
if and only if $\beta=0$. Thus, $\|\nabla \rho\| \in L^d(B_{1/2})$ and $\|\nabla \rho\| \notin \cup_{p \in (d, \infty)}L^{p}(B_{1/2})$. Therefore, both {\bf (H)} and {\bf (S)} hold, but {\bf ($\alpha^{\prime}$)} is not satisfied. Moreover in case of $d=3$, both {\bf ($\beta$)} and {\bf ($\gamma$)} are not satisfied.
\item[(ii)]
Define
\begin{equation*} \label{w1dvmo}
\rho(x):=2+\|x\|^2+\cos \left( \ln \ln \left(1+\frac{1}{\|x\|} \right)  \right), \quad {x \in \mathbb{R}^d \setminus \{ 0\}, \quad \rho(0)=0.}
\end{equation*}
Then it is easy to check that \eqref{bddpos} holds. Since $\lim_{\|x\| \rightarrow 0} \rho (x)$ does not exist, we have $\rho \notin \cup_{p \in (d, \infty)}  H^{1,p}_{loc}(\mathbb{R}^d)$ by Morrey's embedding. Now we claim that $\rho \in H^{1,d}_{loc}(\mathbb{R}^d)$.  Let
$$
v(x):=\ln \ln \left(1+\frac{1}{\|x\|} \right), \quad x \in \mathbb{R}^d \setminus \{0\}.
$$
Then $v \in C^{\infty}(\mathbb{R}^d \setminus \overline{B}_{1/2})$. To show the claim, it suffices to show by the chain rule that $v \in H^{1,d}(B_1)$. First, note that 
$$
\int_{B_1} |v|^d dx  = c_d \int_0^1 \left( \ln \ln \left(1+\frac{1}{r}\right) \right)^d r^{d-1} dr <\infty,
$$
where $c_d = dx(B_1) \cdot d$ since the integrand of the right hand side above is bounded on $(0, \varepsilon)$ for some $\varepsilon \in (0,1)$. 
Furthermore, by the chain rule we have
$$
\nabla v(x)  = \frac{1}{\ln (1+\frac{1}{\|x\|})} \cdot \frac{1}{1+\frac{1}{\|x\|}} \frac{-1}{\|x\|^2} \frac{x}{\|x\|}, \quad \text{for a.e. $x \in B_1$}
$$
and
$$
\int_{B_1} \|\nabla v\|^d dx  = c_d \int_0^1 \frac{1}{ \left(\ln(1+\frac{1}{r})\right)^d \cdot (1+\frac{1}{r})^d \cdot r^{d+1}  } dr<\infty,
$$
since there exist constants $M>0$ and $\varepsilon \in (0,\frac12)$ such that the integrand of the right hand side above is bounded by $\frac{M}{r |\ln r|^d}$ on $(0, \varepsilon)$ and that
$$
\int_0^{\varepsilon} \frac{1}{r |\ln r|^d} dr <\infty.
$$
Hence the claim is shown. Therefore, {\bf (H)} holds, but  neither {\bf (S)} nor {\bf ($\alpha^{\prime}$)} holds. In particular, if $d=3$, then both {\bf ($\beta$)} and {\bf ($\gamma$)} are not satisfied.
\item[(iii)]
Let $d=3$. Given $\varepsilon \in (0,\frac{1}{2})$, define for $x=(x_1, x_2, x_3) \in \mathbb{R}^3$
$$
\rho_1(x):=\left\{\begin{matrix}
 \frac{|x_1|}{(x_1^2+x_2^2)^{\frac{1}{2+\varepsilon}}}, & \quad \text{ if }\; x_1^2+x_2^2>0.
   \\[10pt]
 0, & \quad \text{ if }\; (x_1, x_2)=0.
\end{matrix}\right.
$$
Then $\rho_1 \in C(\mathbb{R}^3)$ and $\partial_3 \rho_1=0$, Moreover, for $i \in \{1, 2 \}$ it holds
\begin{eqnarray*}
\partial_i \rho_1  = \frac{\delta_{1i} \frac{x_1}{|x_1|} (x_1^2+x_2^2)^{\frac{1}{2+\varepsilon}} -\frac{|x_1|}{2+\varepsilon} \cdot \frac{2x_i}{(x_1^2+x_2^2)^{1-\frac{1}{2+\varepsilon}}}}{(x_1^2+x_2^2)^{\frac{2}{2+\varepsilon}}}  = \frac{\delta_{1i}}{(x_1^2+x_2^2)^{\frac{1}{2+\varepsilon}}} \frac{x_1}{|x_1|}-\frac{|x_1|}{2+\varepsilon} \frac{2x_i}{(x_1^2+x_2^2)^{1+\frac{1}{2+\varepsilon}}}, \
\end{eqnarray*}
where $\delta_{11}=1$ and $\delta_{12}=0$. Thus, it follows from the triangle inequality that
$$
|\partial_1 \rho_1|  \geq \frac{1 -\frac{2}{2+\varepsilon}}{(x_1^2+x_2^2)^{\frac{1}{2+\varepsilon}}} \;\text{ and }\; \;  \|\nabla \rho_1\| \leq \sqrt{2}\left(1+\frac{2}{2+\varepsilon}\right) \frac{1}{(x_1^2+x_2^2)^{\frac{1}{2+\varepsilon}}} \text{ a.e. on $\mathbb{R}^3$}.
$$
By Fubini's theorem and the change of variables, we obtain
\begin{eqnarray*}
&&\int_{-\sqrt{\frac12}}^{\sqrt{\frac12}} \int_{\{ x_1^2+x_2^2 \leq \frac12 \}} |\partial_1 \rho_1|^{2+\varepsilon} dx_1 dx_2 dx_3 \geq 2\pi \left( 1-\frac{2}{2+\varepsilon} \right)^{2+\varepsilon} \int_{-\sqrt{\frac12}}^{\sqrt{\frac12}} \int_0^{\sqrt{\frac12}} \frac{1}{r} dr dx_3 = \infty,
\end{eqnarray*}
but for any $\lambda \in [0, \varepsilon)$ and $R>0$ we have
\begin{eqnarray*}
\;\int_{-R}^{R} \int_{\{ x_1^2+x_2^2 \leq R^2 \}} \|\nabla \rho_1 \|^{2+\lambda} dx_1 dx_2 dx_3 \leq k \int_{-R}^{R}
\int_{0}^{R} \frac{1}{r^{2\left( \frac{2+\lambda}{2+\varepsilon} \right)-1}} dr dx_3<\infty,
\end{eqnarray*}
where $k=\sqrt{2}^{\lambda} \left( 1+\frac{2}{2+\varepsilon}  \right)^{\lambda}$. Hence $\rho_1 \in H_{loc}^{1,2+\lambda}(\mathbb{R}^3) \cap C(\mathbb{R}^3)$ for any $\lambda \in [0, \varepsilon)$, but $\|\nabla \rho_1\| \notin L^{2+\varepsilon}(B_1)$. Choose $\zeta \in C_0^{\infty}(\mathbb{R}^3)$ so that $\zeta \geq 0$ on $B_2$,  $\zeta = 1$ on $B_{1}$ and ${\rm supp}(\zeta) \subset B_2$. Now define
$$
\rho(x):=1+\|x\|^2 + \zeta(x) \rho_1(x), \quad x \in \mathbb{R}^3.
$$
Then $\rho \in  H^{1,2+\lambda}_{loc}(\mathbb{R}^3) \cap C(\mathbb{R}^3) \cap C^{\infty}(\mathbb{R}^3 \setminus \overline{B}_2)$ for all $\lambda \in [0, \varepsilon)$, but  $\|\nabla \rho \| \notin L^{2+\varepsilon}(B_1)$. Therefore, {\bf ($\alpha^{\prime}$)}, {\bf ($\beta$)}, {\bf ($\gamma$)} and {\bf (H)} are not satisfied, but {\bf (S)} is fulfilled.
\item[(iv)]
Define
$$
\phi(t) :=\left\{\begin{matrix}
\displaystyle \frac{1}{(\ln t)^2} \cdot\frac{1}{|t|}, \quad &t \in (-\frac12,\frac12) \\[1em]
0, \quad \quad  &t \in \mathbb{R} \setminus (-\frac12, \frac12)
\end{matrix}\right. 
$$
Then it follows from integration by substitution that $\phi \in L^1\left( \mathbb{R}\right)$, but $\phi \notin L^{1+\theta}((-\frac12, \frac12))$ for any $\theta>0$. Now let $\varepsilon \in (0, 1)$ and define
$$
\psi(t) = \phi(t)^{\frac{1}{2+\varepsilon}}, \qquad t \in (-1,1).
$$
Then it holds $\psi \in L^{2+\varepsilon}(\mathbb{R})$, but $\psi \notin L^{2+\varepsilon+\delta}((-1,1))$ for any $\delta>0$.
Now define
$$
v(t) = \int_{-\infty}^{t} \psi(s) ds, \qquad t \in \mathbb{R}.
$$
Then $v \geq 0$ on $\mathbb{R}$ and it follows from \cite[Lemma 8.2]{BRE} that $v \in C(\mathbb{R}) \cap H^{1,2+\varepsilon}_{loc}(\mathbb{R})$ and $v'=\psi$ on $\mathbb{R}$. Choose $\zeta \in C_0^{\infty}(\mathbb{R})$ so that $\zeta \geq 0$ on $\mathbb{R}$, $\zeta=1$ on $(-1,1)$ and ${\rm supp}(\zeta) \subset (-2,2)$. Define
$$
\rho(x) = 1+\|x\|^2+ \prod_{i=1}^d \zeta(x_i)v(x_i), \qquad x = (x_1, \ldots, x_d) \in \mathbb{R}^d.
$$
Then $\rho(x) \geq 1$ for any $x \in \mathbb{R}^d$ and $\rho \in H^{1,2+\varepsilon}_{loc}(\mathbb{R}^d) \cap C(\mathbb{R}^d) \cap C^{\infty}(\mathbb{R}^d \setminus \overline{B}_{2\sqrt{d}})$, but
$\|\nabla \rho \| \notin L^{2+\varepsilon+\delta}(B_{\sqrt{d}})$ for any $\delta>0$. Therefore, {\bf ($\alpha^{\prime}$)}, {\bf ($\beta$)}, {\bf ($\gamma$)} and {\bf (H)} are not satisfied, but {\bf (S)} is fulfilled.
\end{itemize}
\end{exam}
\text{}\\
{\bf Acknowledgement.} \; The author would like to thank Professor Gerald Trutnau for his valuable comments and suggestions. Additionally, the author wishes to extend his deepest appreciation to the anonymous referee for providing helpful comments and suggestions on this manuscript.

\text{}\\

\centerline{}
Haesung Lee\\
Department of Mathematics and Computer Science, \\
Korea Science Academy of KAIST, \\
105-47 Baegyanggwanmun-ro, Busanjin-gu,\\
Busan 47162, Republic of Korea\\
E-mail: fthslt14@gmail.com

\begin{thebibliography}{XXX}

\bibitem{AHS77}
S. Albeverio,  R. Høegh-Krohn, L. Streit, {\it Energy forms, Hamiltonians, and distorted Brownian paths},
J. Mathematical Phys. 18 (1977), no. 5, 907–917.


\bibitem{BKR97} V. I. Bogachev, N. V. Krylov, M. R\"{o}ckner, {\it Elliptic regularity and essential self-adjointness of Dirichlet operators on $\mathbb{R}^n$}, Ann. Scuola Norm. Sup. Pisa Cl. Sci. (4) 24 (1997), no. 3, 451–461.


\bibitem{BKRS} V. I. Bogachev, N. V. Krylov, M. R\"{o}ckner, and S. V. Shaposhnikov, {\it  Fokker-Planck-Kolmogorov equations}, Mathematical Surveys and Monographs, 207. American Mathematical Society, Providence, RI, 2015.


\bibitem{BH}
N. Bouleau, F. Hirsch, {\it Dirichlet forms and analysis on Wiener space}, De Gruyter Studies in Mathematics, 14. Walter de Gruyter Berlin, 1991.


\bibitem{BRE} H. Brezis, {\it
Functional analysis, Sobolev spaces and partial differential equations,} 
Universitext. Springer, New York, 2011.


\bibitem{CF96}
P. Cattiaux, M. Fradon, {\it Entropy, reversible diffusion processes, and Markov uniqueness},
J. Funct. Anal. 138 (1996), no. 1, 243–272.



\bibitem{Eb}
 A. Eberle, {\it Uniqueness and non-uniqueness of semigroups generated by singular diffusion operators}, Lecture Notes in Mathematics, 1718. Springer-Verlag, Berlin (1999).


\bibitem{Eb20} 
A. Eberle,  {\it $L^p$ uniqueness of non-symmetric diffusion operators with singular drift coefficients. I. The finite-dimensional case}, J. Funct. Anal. 173 (2000), no. 2, 328–342.

\bibitem{EG15} L.C. Evans, R.F. Gariepy, 
{\it Measure theory and fine properties of functions},
Revised edition. Textbooks in Mathematics. CRC Press, Boca Raton, FL, 2015.

\bibitem{FOT}
M. Fukushima, Y. Oshima, M. Takeda, {\it Dirichlet forms and symmetric Markov processes}, Second revised and extended edition, De Gruyter Studies in Mathematics, 19. Walter de Gruyter \& Co., Berlin, 2011.


\bibitem{Gilbarg}
D. Gilbarg, N.S. Trudinger, {\it Elliptic partial differential equations of second order}, 
Reprint of the 1998 edition. Classics in Mathematics. Springer-Verlag, Berlin, 2001.


\bibitem{K08}
M. Kolb, {\it On the strong uniqueness of some finite dimensional Dirichlet operators}, Infin. Dimens. Anal. Quantum Probab. Relat. Top. 11 (2008), no. 2, 279–293.

\bibitem{Kr96}
N. V. Krylov, {\it Lectures on elliptic and parabolic equations in H\"{o}lder spaces}, Graduate Studies in Mathematics, 12. American Mathematical Society, Providence, RI, 1996.



\bibitem{LS92}
V. Liskevich, Y. A. Semenov, {\it Dirichlet operators: A priori estimates and the
uniqueness problem}, J. Funct. Anal. 109 (1992), no. 1, 199–213.


\bibitem{L99}
V. Liskevich, {\it On the uniqueness problem for Dirichlet operators}, J. Funct. Anal. 162 (1999), no. 1, 1–13. 


\bibitem{LST22}
H. Lee, W. Stannat, G. Trutnau, \textit{Analytic theory of Itô-stochastic differential equations with non-smooth coefficients}, SpringerBriefs in Probability and Mathematical Statistics. Springer, Singapore, 2022. 


\bibitem{LT21in}
H. Lee, G. Trutnau, {\it Existence and uniqueness of (infinitesimally) invariant measures for second order partial differential operators on Euclidean space}, J. Math. Anal. Appl. 507 (2022), no. 1, Paper No. 125778.




\bibitem{MR}
Z. Ma, M. Röckner, {\it Introduction to the theory of (non-symmetric) Dirichlet forms}, Universitext. Springer-Verlag, Berlin, 1992. 



\bibitem{RZ94}
M. R\"{o}ckner, T. S. Zhang, {\it Uniqueness of generalized Schrödinger operators. II}, J. Funct. Anal. 119 (1994).


\bibitem{St99} 
W. Stannat, {\it (Nonsymmetric) Dirichlet operators on $L^1$: Existence, uniqueness and associated Markov processes},  Ann. Scuola Norm. Sup. Pisa Cl. Sci. (4) 28. 1999. No. 1. 99-140.




\bibitem{T92}
M. Takeda, {\it The maximum Markovian selfadjoint extensions of generalized Schrödinger operators}, J. Math. Soc. Japan 44 (1992), no. 1, 113–130. 


\bibitem{T73}
N. S. Trudinger, {\it Linear elliptic operators with measurable coefficients}, Ann. Scuola Norm. Sup. Pisa (3) 27 (1973), 265--308.


\bibitem{W85}
N. Wielens, {\it The essential self-adjointness of generalized Schr\"{o}dinger
operators}, J. Funct. Anal. 65 (1985), 98-115.

\text{}\\


\end{thebibliography}
\end{document}